\theoremstyle{theorem}
\newtheorem{thm}{Theorem}[section]
\newtheorem{prp}[thm]{Proposition}
\newtheorem{crl}[thm]{Corollary}
\newtheorem{lemma}[thm]{Lemma}
\theoremstyle{definition}
\newtheorem{df}[thm]{Definition}
\newtheorem{ex}[thm]{Example}
\theoremstyle{remark}
\newtheorem{rmk}[thm]{Remark}
\DeclareMathOperator{\CHilb}{CHilb}
\DeclareMathOperator{\Hilb}{Hilb}
\DeclareMathOperator{\di}{dim}
\newcommand{\dimm}{ \, \di }
\newcommand{\C}{\mathbb{K}}
\newcommand{\Li}{\mathbb{L}}
\begin{document}

\title{Motivic classes of curvilinear Hilbert schemes and Igusa zeta functions}
\author{Ilaria Rossinelli}
\address{Department of Mathematics, EPFL Lausanne}
\email{ilaria.rossinelli@epfl.ch}
\date{}
\maketitle

\begin{abstract}
This paper delves into the study of curvilinear Hilbert schemes associated with a singular variety $(X,0)$ and the relationship between their motivic classes and the motivic measure on the arc scheme $X_\infty$ of $X$ introduced by Denef and Loeser.

We introduce an Igusa zeta function specifically tailored for curvilinear Hilbert schemes for which we provide an explicit formulation in terms of an embedded resolution of the singularity, and we consequently obtain a recursive formula to compute the motivic classes of curvilinear Hilbert schemes in terms of the resolution.

In addition, the paper explores and analyzes the geometry and combinatorics of curvilinear Hilbert schemes in the context of plane curve singularities and their topological invariants.
\end{abstract}

\tableofcontents

\section{Introduction} \label{intro}
The motivic measure $\mu$ was introduced by Kontsevich \cite{kon} as a measure on the arc scheme of a smooth variety, and Denef and Loeser later extended it to any singular variety in \cite{mea} and \cite{DL}.
For a given variety $X$ over an algebraically closed field $\C$ of characteristic zero, its arc scheme is defined as the scheme $X_\infty$ characterized by the property that for any field extension $K$ of $\C$ the set of $K$-points $X_\infty (K)$ is in bijection with the set $$\operatorname{Hom}_{\operatorname{Sch}_\C}(\operatorname{Spec}(K[[t]]) , X)$$ of $K[[t]]$-points of the variety $X$. 
Motivic integrals are then defined as integrals of suitable functions on the arc scheme of any given variety with respect to this measure it is equipped with. 

Our second object of interest is the punctual Hilbert scheme $\Hilb^k_0(X)$ of $X$ at a point $0 \in X$, defined as the variety parameterizing zero-dimensional subschemes of $X$ of colength $k$ entirely supported at $0$, for which the historical reference is \cite{EGA}. Roughly speaking, it classifies fat points of multiplicity $k$ on $X$. In particular, given a singular variety $(X,0)$ we are interested in studying the singularity through the punctual Hilbert scheme of zero-dimensional subschemes supported at the singularity.

In this paper we focus on curvilinear subschemes, appearing for instance in \cite{Berczi},
as they are naturally related to (truncations of) the arc scheme of the affine space. Indeed, the curvilinear Hilbert scheme of $X$ is defined as the open set of points $Z \in \Hilb^k_0(X)$ such that $$\mathcal{O}_{Z}\cong \C[t] / (t^k).$$

We aim to further develop the theory of motivic integration \textit{in relationship to} Hilbert schemes, and specifically express some motivic integrals in terms of motivic classes of curvilinear Hilbert schemes.
This is accomplished in the following theorem presented in Section \ref{curv}, which is formulated in the general setting where $(W,0)$ is a smooth variety of dimension $n$ and $V \subseteq W$ is a closed subvariety. It relies on the construction of a geometric morphism $$\Phi: Q_0^k(V) \to \CHilb^k_0(V)$$ relating a certain quotient variety, denoted by $Q^k_0(X)$, of the (truncation of) points $\gamma \in W_\infty$ such that $ord_0(\gamma) = 1$ and $ord_V(\gamma) = k$ to curvilinear zero-dimensional subschemes of length $k$ of $V$ supported at $0$, that is the points of $\CHilb^k_0(X)$. 
\begin{thm} \label{main1}
Let $(W,0)$ be a smooth variety of dimension $n$ and let $V \subseteq W$ be a closed subvariety with $0\in V$.
For $k >0$ let $\pi_k : W_\infty \to W_k$ be the morphism of schemes induced by the projection $\C[[t]] \to \C[t]/(t^k)$ between the arc scheme $W_\infty$ and the $k$-jet scheme $W_k$ and let $A = \pi_1^{-1}(0) \setminus \pi_2^{-1}(0) \subseteq W_\infty$.
Then, we have $$\int_A (\Li^{-ord_V})^s d\mu = 
\frac{1}{\Li^s}\frac{1}{\Li^n}\left(\frac{1}{\Li^n} - \frac{1}{\Li^{2n}}\right)   +   \frac{\Li -1}{\Li^2} (1 - \Li^s) Q_V( \Li^{-s - (n-1)} )$$ where $$Q_V(T) = \displaystyle\sum_{k\geq 2} [\CHilb^k_0(V)] \, T^k$$ is the generating series of motivic classes of curvilinear Hilbert schemes.
\end{thm}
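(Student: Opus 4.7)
The plan is to stratify $A$ by the value of $ord_V$ and use the morphism $\Phi$ to convert jet-space motivic measures into classes of curvilinear Hilbert schemes. First, I would decompose $A = A_\infty \sqcup \bigsqcup_{k \geq 1} A_k$ according to the level sets $A_k = \{\gamma \in A : ord_V(\gamma) = k\}$ and $A_\infty = A \cap V_\infty$; the integrand vanishes on $A_\infty$. Writing $B_{\geq k} = \{\gamma \in A : ord_V(\gamma) \geq k\}$ with $A_k = B_{\geq k} \setminus B_{\geq k+1}$, an Abel summation converts the stratified integral into
$$\int_A (\Li^{-ord_V})^s\, d\mu \;=\; \Li^{-s}\, \mu(B_{\geq 1}) \;+\; (1 - \Li^s)\, \sum_{k \geq 2} \Li^{-sk}\, \mu(B_{\geq k}),$$
so that the factor $(1 - \Li^s)$ in the theorem arises from this telescoping.

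Since $0 \in V$, every $\gamma \in A$ has $ord_V(\gamma) \geq 1$, so $B_{\geq 1} = A = \pi_1^{-1}(0) \setminus \pi_2^{-1}(0)$. The cylinder formula on the smooth variety $W$ of dimension $n$ evaluates $\mu(A)$ in closed form, producing exactly the first summand $\Li^{-s}\Li^{-n}(\Li^{-n} - \Li^{-2n})$ of the identity.

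For the $k \geq 2$ part, each $B_{\geq k}$ is a cylinder over the constructible locus $X_k \subset W_k$ of $k$-jets with $\pi_1(\gamma_k) = 0$, $\pi_2(\gamma_k) \neq 0$, and $\gamma_k \in V_k$. The construction preceding the theorem realizes $X_k$ as a $G_k$-torsor over $Q_0^k(V)$, where $G_k$ is the reparametrization group of invertible $k$-jet substitutions $t \mapsto a_1 t + a_2 t^2 + \ldots + a_{k-1} t^{k-1}$ with $a_1 \neq 0$ and has motivic class $(\Li - 1)\Li^{k-2}$; the morphism $\Phi$ then gives $[Q_0^k(V)] = [\CHilb^k_0(V)]$. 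Multiplying these relations and applying the cylinder formula yields
$$\mu(B_{\geq k}) \;=\; \frac{\Li - 1}{\Li^2}\, \Li^{-k(n-1)}\, [\CHilb^k_0(V)],$$
and substituting back into the Abel-summed formula collapses the sum into $Q_V(\Li^{-s-(n-1)})$ with the announced prefactor $\frac{\Li - 1}{\Li^2}(1 - \Li^s)$, giving the second summand of the theorem.

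The main obstacle is the third step: verifying (i) that $X_k$ is a $G_k$-torsor with geometric quotient $Q_0^k(V)$, so the motivic class factors as $[X_k] = [G_k]\,[Q_0^k(V)]$, and (ii) that $\Phi$ induces the equality $[Q_0^k(V)] = [\CHilb^k_0(V)]$ (for instance by being a piecewise trivial fibration with point-fibers, or an isomorphism). Both points rely on the geometric content developed in the construction of $\Phi$ in Section \ref{curv}; once granted, the remainder is a formal computation with cylinder measures and a telescoping geometric series.
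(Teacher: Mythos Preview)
Your proposal is correct and follows essentially the same route as the paper's own proof: the paper also stratifies $A$ by $\{ord_V \geq m\}$ (your $B_{\geq k}$ is the paper's $A_m$), performs the same telescoping rearrangement, identifies the truncation $\pi_k(B_{\geq k})$ with $S_0^k(V)$, and then invokes the torsor relation $[S_0^k(V)] = (\Li-1)\Li^{k-2}[Q_0^k(V)]$ together with $[Q_0^k(V)] = [\CHilb_0^k(V)]$ established earlier in Section~\ref{curv}. Your identification of points (i) and (ii) as the substantive inputs is exactly right; the paper treats these as already proved (Corollaries~\ref{thm-quotient} and~\ref{thm-iso2}) and the remainder is, as you say, a formal cylinder-measure computation.
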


Moreover, with an eye towards singularity theory, we are interested in and ultimately able to explicitly express the Igusa zeta function $\displaystyle\int_A (\Li^{-ord_V})^s d\mu$ hence $Q_V(T)$ in terms of an embedded resolution of singularities of the pair $(W,V)$. From the construction of $\Phi$ and studying the behavior of the order function $ord_0: W_\infty \to \mathbb N$ with respect to resolution maps we obtain the following theorem, again presented in Section \ref{curv}.
\begin{thm} \label{main2}
Let $\pi: (Y, E) \rightarrow (W,V)$ be an embedded resolution of the pair $(W,V)$ as in Definition \ref{res} and let $\widetilde{V}$ denote the strict transform of $V$ under $\pi$. 
Then, we have
$$ \displaystyle\int_{A} (\Li^{-ord_V})^s d\mu = \left(\frac{1}{\Li} - \frac{1}{\Li^2}\right)\Li^{-(n-1)} \displaystyle\sum_{i \in S} \frac{1}{\Li^{N_i s + \nu_j}} \left(  [(E_i \setminus \widetilde{V})^o]   +   \frac{1}{\Li^{s+1} - 1} [(E_i \cap \widetilde{V})^o](\Li -1)   \right)$$ where $S$ denotes the set of divisors of order $1$ (see Definition \ref{def-order}).
\end{thm}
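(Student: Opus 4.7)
The plan is to transport the integral to $Y_\infty$ via the motivic change of variables formula along $\pi$, translate the defining condition of $A$ into an explicit constraint on the orders along the divisors $E_i$, and then compute by stratifying according to the SNC structure of $\pi^{-1}(V)$.

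First, I would apply the Denef--Loeser change of variables. Writing $\pi^*\mathcal{I}_V = \mathcal{I}_{\widetilde V}\cdot\prod_i\mathcal{I}_{E_i}^{N_i}$ and $ord_{\mathrm{jac}_\pi}=\sum_i\nu_i\, ord_{E_i}$, the pullback of the integrand is controlled by the identity $ord_V(\pi\circ\tilde\gamma)=ord_{\widetilde V}(\tilde\gamma)+\sum_i N_i\, ord_{E_i}(\tilde\gamma)$, giving
$$\int_A (\Li^{-ord_V})^s d\mu = \int_{\pi_\infty^{-1}(A)} \Li^{-s\, ord_{\pi^*V}-ord_{\mathrm{jac}_\pi}}\,d\mu_Y.$$
Next, I identify $\pi_\infty^{-1}(A)$. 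Writing $\pi^{-1}(\{0\})=\sum_i M_i E_i$ as a divisor on $Y$, the condition $ord_0(\gamma)=1$ pulls back to $\sum_i M_i\, ord_{E_i}(\tilde\gamma)=1$, which forces exactly one index $i$ with $M_i=1$ to contribute, with $ord_{E_i}(\tilde\gamma)=1$ and $ord_{E_j}(\tilde\gamma)=0$ for $j\neq i$; these indices are precisely $S$ by Definition \ref{def-order}.

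For each $i\in S$, I would then split according to whether $\tilde\gamma(0)\in(E_i\setminus\widetilde V)^o$ or $\tilde\gamma(0)\in(E_i\cap\widetilde V)^o$ and compute the cylinder measures in local SNC coordinates. In the first case the integrand is the constant $\Li^{-sN_i-\nu_i}$ and the level-one cylinder of arcs based at $(E_i\setminus\widetilde V)^o$ with $ord_{E_i}=1$ has motivic measure $[(E_i\setminus\widetilde V)^o](\Li-1)\Li^{-n-1}$, producing the $[(E_i\setminus\widetilde V)^o]$ summand of the formula. In the second case I stratify further by $ord_{\widetilde V}(\tilde\gamma)=k\ge 1$, obtaining in level $k$ the measure $[(E_i\cap\widetilde V)^o](\Li-1)^2\Li^{-n-k-1}$ and integrand $\Li^{-s(N_i+k)-\nu_i}$. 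The geometric sum $\sum_{k\ge 1}\Li^{-k(s+1)}=(\Li^{s+1}-1)^{-1}$ then produces the $[(E_i\cap\widetilde V)^o]$ summand. Summing over $i\in S$ and pulling the common prefactor $(\Li-1)\Li^{-n-1}=(\Li^{-1}-\Li^{-2})\Li^{-(n-1)}$ outside yields the claimed formula.

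The main technical point will be the second stratification: I need to verify that $\widetilde V$ contributes to $ord_{\pi^*V}$ with multiplicity one but contributes nothing to the Jacobian (since it is a strict transform, not an exceptional component), and to carefully count the free jet coordinates in a local chart simultaneously containing $E_i$ and $\widetilde V$ as coordinate hyperplanes. The condition $M_i=1$ defining $S$ is what collapses the usual Denef--Loeser sum over non-empty subsets of divisors to the single-index contributions above, so checking Definition \ref{def-order} matches this combinatorial condition will be essential.
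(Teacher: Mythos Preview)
Your proposal is correct and follows essentially the same route as the paper: change of variables along $\pi$, identification of $\pi_\infty^{-1}(A)$ via the relation $ord_0(\pi\circ\tilde\gamma)=\sum_j m_j\,ord_{E_j}(\tilde\gamma)$ (your $M_i$ are the paper's $m_i$), reduction to the divisors $E_i$ with $m_i=1$ with $ord_{E_i}=1$, and then the further stratification by $ord_{\widetilde V}=k\ge 0$ summed as a geometric series. Your cylinder measures and the final prefactor match the paper's computations exactly, and your remark that $\widetilde V$ carries $N_{\widetilde V}=1$, $\nu_{\widetilde V}=0$ is precisely what the paper uses implicitly when it applies the standard SNC lemma to the divisors $E_i+\widetilde V$ for $i\in S$.
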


As one of the main consequences, we obtain a complete description of the motivic classes $[\CHilb^k_0(V)]$ of curvilinear Hilbert schemes in terms of an embedded resolution of singularities.
\begin{crl} \label{main3}
For $k \geq 2$ we have 
$$\Li^{-(n-1)k} [\CHilb^k_0(V)] - \Li^{-(k+1)(n-1)}[\CHilb^{k+1}_0(V)]) = $$
$$
\Li^{-(n-1)} \left( 
\displaystyle\sum_{i \in S \, | \, N_i = k} 
\Li^{-\nu_i}[(E_i \setminus \widetilde{V})^o] \, \,
+ \, \,
\displaystyle\sum_{i \in S , \, m  \geq 1 \, | \, N_i +m = k } 
\Li^{ -\nu_i -m}(\Li - 1)[(E_i \cap \widetilde{V})^o] \right) .$$
\end{crl}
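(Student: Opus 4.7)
The plan is to combine Theorems \ref{main1} and \ref{main2}, which give two different expressions for the same Igusa zeta function $\int_A (\mathbb{L}^{-ord_V})^s d\mu$. Equating them produces an identity that I will view as an equality of formal power series in $\mathbb{L}^{-s}$, from which Corollary \ref{main3} falls out by extracting the coefficient of $\mathbb{L}^{-sk}$ for each $k \geq 2$. The two factors $\frac{\mathbb{L}-1}{\mathbb{L}^2} = \frac{1}{\mathbb{L}} - \frac{1}{\mathbb{L}^2}$ appearing on each side cancel cleanly, which is why the final statement does not carry them.

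First I would rewrite the left-hand side of Theorem \ref{main1}. Substituting $T = \mathbb{L}^{-s-(n-1)}$ gives
$$Q_V(\mathbb{L}^{-s-(n-1)}) = \sum_{k \geq 2} [\CHilb^k_0(V)] \, \mathbb{L}^{-sk-(n-1)k},$$
and multiplying by $(1-\mathbb{L}^s)$ and reindexing the shifted sum with $k \mapsto k+1$ shows that for every $k \geq 2$ the coefficient of $\mathbb{L}^{-sk}$ in $(1-\mathbb{L}^s)Q_V(\mathbb{L}^{-s-(n-1)})$ equals
$$\mathbb{L}^{-(n-1)k}[\CHilb^k_0(V)] - \mathbb{L}^{-(n-1)(k+1)}[\CHilb^{k+1}_0(V)],$$
i.e.\ precisely the left-hand side of the corollary. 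I would also note that the extra boundary term $\mathbb{L}^{-s}\mathbb{L}^{-n}(\mathbb{L}^{-n} - \mathbb{L}^{-2n})$ in Theorem \ref{main1} is purely a multiple of $\mathbb{L}^{-s}$, hence only contributes to the coefficient of $\mathbb{L}^{-s}$ (the $k=1$ slot) and can be ignored for $k \geq 2$.

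Next I would expand the right-hand side of Theorem \ref{main2} as a power series in $\mathbb{L}^{-s}$. The key computation is the geometric series
$$\frac{1}{\mathbb{L}^{s+1}-1} = \sum_{m \geq 1} \mathbb{L}^{-(s+1)m},$$
which converts the divisor-contribution indexed by $i \in S$ into
$$\mathbb{L}^{-N_i s - \nu_i}[(E_i \setminus \widetilde V)^o] \; + \; (\mathbb{L}-1)\sum_{m \geq 1} \mathbb{L}^{-(N_i+m)s - \nu_i - m}[(E_i \cap \widetilde V)^o].$$
After cancelling the common factor $\frac{\mathbb{L}-1}{\mathbb{L}^2}$ with the left-hand side, the coefficient of $\mathbb{L}^{-sk}$ on the right is exactly
$$\mathbb{L}^{-(n-1)}\!\left(\sum_{i \in S,\, N_i = k} \mathbb{L}^{-\nu_i}[(E_i\setminus \widetilde V)^o] + \sum_{\substack{i \in S,\, m \geq 1 \\ N_i+m=k}} \mathbb{L}^{-\nu_i - m}(\mathbb{L}-1)[(E_i \cap \widetilde V)^o]\right)\!.$$

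Equating the two coefficient expressions yields the corollary. There is no real obstacle here: the statement is a purely formal consequence of the two main theorems, and the only thing to be careful about is the bookkeeping of the prefactors $\frac{\mathbb{L}-1}{\mathbb{L}^2}$, $\mathbb{L}^{-(n-1)}$, and $(1-\mathbb{L}^s)$, together with the verification that the boundary term from Theorem \ref{main1} only affects the $k=1$ coefficient. Once those are tracked, equating coefficients of $\mathbb{L}^{-sk}$ for $k \geq 2$ on both sides immediately gives the claimed recursive formula.
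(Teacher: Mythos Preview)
Your proposal is correct and follows essentially the same approach as the paper: combine the two expressions for the Igusa zeta function from Theorems~\ref{main1} and~\ref{main2}, expand $\frac{1}{\Li^{s+1}-1}$ as a geometric series, and extract the coefficient of $\Li^{-sk}$ for each $k\geq 2$. The paper's own argument is in fact terser than yours, merely noting that ``the equality descends to the coefficients of fixed degree terms''; your careful tracking of the prefactors and of the boundary term at $k=1$ makes the bookkeeping explicit.
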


We ultimately specialized our work to the case of (isolated reduced) complex plane curve singularities in Section \ref{ex}. In this case, particular interest lies in the geometric interpretation of topological invariants of the singularity i.e. link invariants. 

In this direction, from Corollary \ref{main3} we prove that the motivic classes of curvilinear Hilbert schemes of complex plane curves are polynomial in $\Li$ and, when working with isolated reduced singularities, we can consider their link and these polynomials become as well link invariants with a precise geometric description in terms of an embedded resolution. 
\begin{crl} \label{main4}
Let $(C, 0)$ be a complex plane curve.
For any $k\geq 2$ we have that $[\CHilb^k_0(C)]$ is polynomial in $\Li$ and assuming $(C,0)$is an isolated reduced complex plane curve singularity, the polynomial $[\CHilb^k_0(C)]$ is a topological invariant of $(C,0)$.
\end{crl}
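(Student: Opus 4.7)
The plan is to apply Corollary \ref{main3} with $n=2$, i.e., to a plane curve $C \subset W$ in a smooth surface $W$, and to exploit the combinatorial simplicity of embedded resolutions of plane curves. Fix an embedded resolution $\pi : (Y,E) \to (W,C)$. For such a resolution every exceptional divisor $E_i$ is isomorphic to $\mathbb{P}^1$, so $(E_i \setminus \widetilde{C})^o$ is $\mathbb{P}^1$ minus finitely many points (the intersections of $E_i$ with the other exceptional components and with the strict transform) and $(E_i \cap \widetilde{C})^o$ is a finite reduced set of points; both motivic classes therefore lie in $\mathbb{Z}[\Li]$ and have $\Li$-degree at most one.

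For the polynomiality statement I argue by induction on $k \geq 2$. The base case is $[\CHilb^2_0(C)] \in \mathbb{Z}[\Li]$: a length-two curvilinear subscheme supported at $0$ is determined by a tangent direction, so $\CHilb^2_0(C) \cong \mathbb{P}(T_0 C)$ is either $\mathbb{P}^1$ (singular case, giving $1+\Li$) or a point (smooth case). For the inductive step I multiply the recursion of Corollary \ref{main3} through by $\Li^{k+1}$ to obtain
\[
[\CHilb^{k+1}_0(C)] = \Li\,[\CHilb^k_0(C)] \;-\; \sum_{\substack{i\in S \\ N_i = k}} \Li^{N_i - \nu_i}[(E_i \setminus \widetilde{C})^o] \;-\; \sum_{\substack{i\in S,\, m \geq 1 \\ N_i + m = k}} \Li^{N_i - \nu_i}(\Li - 1)\,[(E_i \cap \widetilde{C})^o].
\]
By the classical inequality $\nu_i \leq N_i$ for divisors of order one in an embedded resolution of a reduced plane curve, the exponents $N_i - \nu_i$ are non-negative, so the right-hand side lies in $\mathbb{Z}[\Li]$, and polynomiality of $[\CHilb^{k+1}_0(C)]$ follows from the inductive hypothesis.

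For the topological-invariance claim, I invoke the classical theorem of Zariski, Brauner and Burau (see also Eisenbud--Neumann) stating that the embedded topological type of an isolated reduced plane curve singularity $(C,0)$ is equivalent information to the weighted dual graph of its minimal embedded resolution, together with the multiplicities $N_i$ and the adjacencies of the strict transform. All the ingredients of the recursion above -- the index set $S$, the multiplicities $N_i$, the discrepancies $\nu_i$ (computable from the intersection matrix encoded in the dual graph), and the motivic classes of the open pieces (determined by the incidence combinatorics) -- can be read off this weighted graph. Hence the polynomial $[\CHilb^k_0(C)]$ is a topological invariant of $(C,0)$. The main obstacle in this argument is confirming the topological nature of $\nu_i$ together with the bound $\nu_i \leq N_i$; both are classical for plane curve singularities but require careful tracking of multiplicities and discrepancies through the resolution.
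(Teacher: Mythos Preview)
Your argument is correct and follows the same route as the paper: compute the base case $[\CHilb^2_0(C)]$ directly, observe that the exceptional components of a plane-curve resolution are copies of $\mathbb{P}^1$ (so the strata $(E_i\setminus\widetilde C)^o$ and $(E_i\cap\widetilde C)^o$ have classes in $\mathbb{Z}[\Li]$), feed these into the recursion of Corollary~\ref{main3}, and deduce topological invariance from the classical equivalence between the embedded topological type and the decorated resolution graph. On the one point you flag --- the bound $\nu_i\le N_i$ for order-one divisors --- you are in fact more explicit than the paper, which does not isolate this exponent issue; the bound holds for any resolution obtained by blowing up only infinitely-near points lying on the strict transform (in particular the minimal one), by exactly the inductive bookkeeping you allude to, and since $[\CHilb^k_0(C)]$ is independent of the chosen resolution this suffices.
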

We also briefly discuss the relationship between the conjecture presented in \cite{ors} and our work developed in Section \ref{curv}, since the mentioned conjecture concerns the interpretation of link invariants in terms of weight polynomials of Hilbert schemes. 

Finally, for plane curve singularities with no smooth branches we obtain a threshold in terms of an embedded resolution starting from which the curvilinear Hilbert schemes of the curve singularity become empty.
\begin{thm} \label{main5}
Let $(C, 0)$ be a complex plane curve singularity with no smooth branches and let $N = \max \{ N_i \, | \, i \in S \}$ for an embedded resolution following the notations of Definition \ref{res} and Definition \ref{def-order}. Then, $\CHilb^k_0(C)$ is empty for any $k>N$ and this bound is optimal.
\end{thm}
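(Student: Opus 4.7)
The plan is to exploit the geometric morphism $\Phi : Q^k_0(V) \to \CHilb^k_0(V)$ from Theorem \ref{main1} together with the arc-lifting analysis underlying Theorem \ref{main2}, translating the \emph{no smooth branches} hypothesis into a statement about the embedded resolution. A branch of $C$ at $0$ is smooth if and only if its strict transform meets some $E_i$ with $ord_{E_i}(m_0) = 1$, so the hypothesis will be encoded as $\widetilde{C} \cap E_i = \emptyset$ for every $i \in S$.

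First I would show that $Q^k_0(V) = \emptyset$ whenever $k > N$. An element of $Q^k_0(V)$ comes from an arc $\gamma \in W_\infty$ with $ord_0(\gamma) = 1$ and $ord_V(\gamma) = k$. Lifting $\gamma$ through $\pi$ to $\widetilde{\gamma} : \operatorname{Spec} \C[[t]] \to Y$, the pullback identities
$$
ord_0(\gamma) = \sum_{i} ord_{E_i}(m_0) \cdot ord_{E_i}(\widetilde{\gamma}),
\qquad
ord_V(\gamma) = ord_{\widetilde{V}}(\widetilde{\gamma}) + \sum_{i} N_i \cdot ord_{E_i}(\widetilde{\gamma})
$$
combined with $ord_0(\gamma) = 1$ force $\widetilde{\gamma}(0)$ onto the smooth locus of a single $E_i$ with $i \in S$, and $\widetilde{\gamma}$ to be transverse to $E_i$: any tangency to $E_i$, or passage through a double point $E_i \cap E_j$, would drive $ord_0(\gamma) \geq 2$. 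Then $ord_V(\gamma) = N_i + m$ with $m \geq 0$ the order of $\widetilde{\gamma}$ along $\widetilde{V}$ at $\widetilde{\gamma}(0)$. The no-smooth-branches hypothesis gives $\widetilde{V} \cap E_i = \emptyset$ for $i \in S$, so $m = 0$ and $ord_V(\gamma) = N_i \leq N$, contradicting $k > N$.

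Next I would invoke the surjectivity on points of $\Phi$ (obtained in Section \ref{curv} by extending the inclusion $\operatorname{Spec}\C[t]/(t^k) \hookrightarrow V \hookrightarrow W$ attached to a curvilinear subscheme $Z$ to an arc in the smooth ambient $W$) to deduce $\CHilb^k_0(V) = \emptyset$ for every $k > N$ from the vanishing of $Q^k_0(V)$. For optimality, I would pick $i_0 \in S$ with $N_{i_0} = N$ and a point $p \in E_{i_0}$ sitting outside $\widetilde{V}$ and the other exceptional components; such a $p$ exists since $E_{i_0}$ is a curve and only finitely many points are excluded. Any arc $\widetilde{\gamma}$ through $p$ transverse to $E_{i_0}$ projects to $\gamma = \pi \circ \widetilde{\gamma}$ with $ord_0(\gamma) = 1$ and $ord_V(\gamma) = N$, so $Q^N_0(V)$ is non-empty and $\Phi$ produces a point of $\CHilb^N_0(V)$.

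The main delicate points will be the careful equivalence between the analytic statement ``all branches are singular'' and the resolution-theoretic statement $\widetilde{C} \cap E_i = \emptyset$ for $i \in S$, and the handling of the boundary cases in the first step, where $\widetilde{\gamma}(0)$ might a priori meet a crossing of exceptional divisors or sit on $\widetilde{V}$; both are ruled out by the pullback identities under the hypothesis $ord_0(\gamma) = 1$, but this has to be stated cleanly in terms of the conventions of Definition \ref{def-order}.
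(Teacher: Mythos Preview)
Your argument is correct and shares the paper's key ingredient: the observation that, under the no-smooth-branches hypothesis, the strict transform $\widetilde{C}$ is disjoint from every exceptional divisor of order $1$. The paper proves this as a separate Lemma after reducing to the irreducible case (using $mult_C(0) > 1$), whereas you encode it as the equivalence ``a branch is smooth iff its strict transform meets some $E_i$ with $m_i = 1$''; both are obtained from the valuation identity $ord_0(\pi(\gamma')) = \sum_j m_j\, ord_{E_j}(\gamma')$ of Equation~(\ref{croo}).

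The difference lies in the concluding step. The paper feeds the Lemma into the motivic recursion of Corollary~\ref{rec}: once $[(E_i \cap \widetilde{V})^o] = 0$ for all $i \in S$, the right-hand side vanishes for $k > N$ and is nonzero at $k = N$, and emptiness is read off from that. You instead work at the level of sets: the arc-lifting bound $ord_C(\gamma) \le N$ for every $\gamma \in A$ gives $S^k_0(C) = \emptyset$ for $k > N$ directly, and then the bijectivity of $\Phi$ on $\C$-points (Theorem~\ref{thm-iso} together with the remark after Corollary~\ref{thm-iso2}) forces $\CHilb^k_0(C) = \emptyset$; optimality is witnessed by an explicit arc rather than a nonzero coefficient. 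Your route is more elementary in that it bypasses the motivic-class recursion and yields emptiness of the variety itself rather than a relation among classes.

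One small imprecision: an element of $S^k_0(C)$ lifts to an arc $\gamma$ with $ord_C(\gamma) \ge k$, not necessarily $= k$; but since you establish $ord_C(\gamma) \le N$ for every $\gamma \in A$, this still rules out $k > N$ and the conclusion is unaffected.
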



\vspace{4.5mm}
This paper is organized as follows. We recall the main notions and results of the theory of motivic integration and Hilbert schemes in Section \ref{back}.
Then, the first part of Section \ref{curv} is dedicated to introducing and studying the motivic class of geometric quotients of the jet scheme of a given variety. Later, we investigate the relationship of these motivic classes with curvilinear Hilbert schemes to prove Theorem \ref{main1}, and finally the behavior of arcs under an embedded resolution of singularities to prove Theorem \ref{main2}.
In Section \ref{ex} we then focus on studying the curvilinear invariants introduced in Section \ref{curv} and some specific properties of curvilinear Hilbert schemes of complex plane curves.


\vspace{4.5mm}
\indent
\textbf{Acknowledgements.} I would like to deeply thank Dimitri Wyss for his advice and supervision during the discussion and writing stages of this paper. Moreover, we would like to thank Alexei Oblomkov for his useful suggestions and insight, as well as Eugene Gorsky for his comments.
This work was supported by the Swiss National Science Foundation [No. 196960].

\section{Background} \label{back}

We denote by $\C$ an algebraically closed field of characteristic zero. We denote by $\operatorname{Sch}_\C$ the category of noetherian schemes over $\C$ and by $\operatorname{Var}_\C$ the category of varieties i.e. separated integral schemes of finite type over the same field $\C$. Throughout this paper, when referring in general to a scheme or a variety we imply objects of these varieties and over $\C$.

\subsection{Motivic integration}

In this section, we recall some basic facts about the theory of motivic integration and in particular of motivic Igusa zeta functions. We refer to \cite{greenbook} or \cite{DL} as references on the subject.

We start by introducing the fundamental notions of $k$-jet scheme and arc scheme, and their truncation maps, which will have a central role in the definition of motivic integrals. 

\begin{df} \label{def-the jet scheme}
Let $X, Y$ be schemes. For any $k > 0$ we denote by $X_k(Y)$ the set 
$$X_k(Y) = \operatorname{Hom}_{\operatorname{Sch}_\C}(Y \times_\C \operatorname{Spec}(\mathbb \C[t] /(t^k)) , X)$$
and the $k$-jet scheme is defined as the scheme representing the corresponding functor $ X_k : \operatorname{Sch}_\C^{op} \to \operatorname{Set} $ defined on objects by $ Y \mapsto X_k(Y)$.
\end{df}

This definition relies on the representability result of the functor $X_k$, which one can find in \cite[Proposition 2.1.3]{greenbook}. The scheme representing the functor $X_k$ will also be denoted by $X_k$ and it will be called the $k$-jet scheme, and we call $k$-jet a point of $X_k$. The $\C$-points of $X_k$ are given by $X_k(\C) = \operatorname{Hom}_{\operatorname{Sch}_\C}(\operatorname{Spec}(\C[t] /(t^k)) , X)$ and correspond to $\C[t] / (t^k)$-points of $X$.

\begin{df}
Let $l \geq r >0$ be two integers and consider the map $\pi^l_r : \mathbb K[t] / (t^l) \to \mathbb K[t] / (t^r)$ of reduction modulo $t^r$. We denote also by $\pi^l_r: X_l \to X_r$ the induced morphism of schemes.
\end{df}

Following \cite[Section 3.3.1, Corollary 3.3.7]{greenbook} the truncation morphisms $(\pi^{k+1}_k)_{k \geq 0}$ form a projective system which admits a limit in the category of functors over $\operatorname{Sch}_\C$. The limit functor is again representable and the scheme representing it can be taken as the definition of the arc scheme. In particular, we summarize the key points of the arc scheme construction in the next definition.

\begin{df} \label{def-the arc scheme}
The arc scheme is defined as the scheme representing the limit functor $\varprojlim X_k = X_\infty : \operatorname{Sch}_\C^{op} \to \operatorname{Set}$. In particular, the functor on object is defined as $$X_\infty(Y) = \operatorname{Hom}_{\operatorname{Sch}_\C}(Y \times_\C \operatorname{Spec}(\mathbb \C[[t]]) , X).$$
\end{df}

The scheme representing the functor $X_\infty$ will also be denoted by $X_\infty$, and we call arc a point of $X_\infty$. 
We observe that in this limit case as well the $\C$-points of $X_k$ are given by $X_\infty(\C) = \operatorname{Hom}_{\operatorname{Sch}_\C}(\operatorname{Spec}(\C[[t]] ) , X)$ and correspond to $\C[[t]]$-points of $X$ as explained in \cite[Remark 3.3.9]{greenbook}.

\begin{df} \label{tr-arcs}
Let $ r >0$ be an integer and consider the map $\pi_r : \mathbb K[[t]] \to \mathbb K[t] / (t^r)$ of reduction modulo $t^r$. We denote also by $\pi_r: X_\infty \to X_r$ the induced morphism of schemes.
\end{df}

\begin{ex} \label{the jet scheme-aff}
We present the example of the affine space and more in general affine varieties since this ultimately is our local model of varieties.
Let $X = \mathbb A^n$ with coordinates $x_1, \dots , x_n$. We have that a morphism $ \phi : \C[x_1, \dots , x_n] \rightarrow \C[t] / (t^k)$ corresponds to its images $\phi(x_i)=a^i \in \C[t] / (t^k)$, each given by a $k$-tuple of coefficients $(a^i_0, \dots, a^i_{k-1}) \in \mathbb \C^k$.
Therefore, we have that 
$$ (\mathbb A^n)_k = (\mathbb A^k)^n = \mathbb A^{nk}.$$
Moreover, considering $Y \subseteq X$ an affine subvariety determined by $I=(f_1, \dots , f_r)$ its $k$-jet scheme is then the affine subscheme of $(\mathbb A^n)_k(\C)$ determined by the conditions $f_j(a^i) = 0$ modulo $t^k$ for $j= 1, \dots , r$.
\end{ex}

Next, we turn our attention to the notion of motivic classes of varieties. These classes are fundamental to defining the motivic measure introduced by Denef and Loeser in \cite{mea} for varieties over $\C$ since it takes value into the ring of motivic classes of varieties.

\begin{df}
The Grothendieck ring of varieties is defined as the quotient of the free abelian group generated on the set of isomorphism classes of varieties over $\C$ modulo the subgroup generated by the relationship
    \begin{center}
    $[X] = [Z] + [X-Z]$ for any $X \in \operatorname{Var}_\C$ and $V \subseteq W$ Zariski closed  
    \end{center}
and then equipped by the multiplication defined as
    \begin{center}
    $[X] [Y] = [X \times Y]$ for any $X,Y \in \operatorname{Var}_\C$.    
    \end{center}
The elements of this ring are called motivic classes and we denote the ring by $\mathbf{K_0}(\operatorname{Var}_\C)$ and by $\Li = [\mathbb{A}^1]$ the motivic class of the affine line.
\end{df}

We recall the following well-known and useful facts, from \cite[Proposition 5]{mot} and \cite[Lemma 2.9]{brid}.
\begin{prp} \label{fib}
Let $X,Y $ be varieties and let $f: X \to Y$ be a fibration of fiber $F$ which is locally trivial for the Zariski
topology of $Y$. Then, $[X] = [F][Y]$.
\end{prp}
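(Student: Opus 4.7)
The plan is to prove the identity $[X] = [F][Y]$ by Noetherian induction on the closed subvariety of $Y$ over which we compute the class, combining the scissor relation in $\mathbf{K_0}(\operatorname{Var}_\C)$ with the multiplicativity on products. Since $f$ is Zariski locally trivial with fiber $F$, by definition there exists an open cover $\{U_i\}_{i \in I}$ of $Y$ such that $f^{-1}(U_i) \cong U_i \times F$; since $Y$ is Noetherian we may assume $I$ is finite.

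First I would handle the trivial base case: if $Y$ itself is the trivializing open (for instance if $|I| = 1$), then $X \cong Y \times F$ and the multiplicativity relation $[Y \times F] = [Y][F]$ in the Grothendieck ring gives the result. For the inductive step, pick any one of the trivializing opens $U = U_{i_0}$ and let $Z = Y \setminus U$, a proper closed subvariety. Then $f^{-1}(U) \cong U \times F$, so $[f^{-1}(U)] = [U][F]$. On the other hand, the restriction $f|_{f^{-1}(Z)} : f^{-1}(Z) \to Z$ is again a Zariski locally trivial fibration with fiber $F$, trivialized by the restricted cover $\{U_i \cap Z\}_{i \in I}$.

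By Noetherian induction (applied to the strictly smaller closed subvariety $Z \subset Y$, viewed with its reduced induced structure and decomposed into its irreducible components if needed), we obtain $[f^{-1}(Z)] = [Z][F]$. Combining with the scissor relation
\[
[X] = [f^{-1}(U)] + [f^{-1}(Z)] = [U][F] + [Z][F] = ([U] + [Z])[F] = [Y][F],
\]
we conclude.

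The main technical subtlety is not the algebra but ensuring the induction is well-posed: one must check that $f^{-1}(Z) \to Z$ really remains Zariski locally trivial with the same fiber $F$ (which follows because pulling back a trivialization $U_i \times F \to U_i$ along the closed immersion $U_i \cap Z \hookrightarrow U_i$ yields $(U_i \cap Z) \times F \to U_i \cap Z$), and that the class $[f^{-1}(Z)]$ is computed correctly when $Z$ is reducible — this is handled by further scissor decomposition into irreducible components, so Noetherian induction on the poset of closed subvarieties of $Y$ is the cleanest framework. No deeper input (such as cohomological descent or étale trivializations) is required precisely because the triviality is assumed Zariski-local.
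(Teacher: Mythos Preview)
Your argument is correct and is the standard proof of this well-known fact: Noetherian induction on $Y$, peeling off a trivializing open at each step and using the scissor relation together with multiplicativity on products. The care you take with reducible $Z$ (decomposing into irreducible components so that the induction stays within the category of varieties as defined in the paper) is exactly the point that needs attention, and you handle it.

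There is nothing to compare against in the paper itself: the proposition is not proved there but merely quoted from the literature (specifically \cite[Proposition 5]{mot}). Your proof is essentially the argument one finds in such references, so there is no meaningful divergence in approach.
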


\begin{prp} \label{geom}
Let $X,Y$ be varieties and let $f: X \to Y$ be a geometric bijection, that is a morphism which is also a bijection $f(\C) : X(\C) \to Y(\C)$ of $\C$-points. Then, $[X] = [Y]$.
\end{prp}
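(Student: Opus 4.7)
The plan is to proceed by Noetherian induction on $\di Y$. The strategy is to stratify $Y$ into locally closed pieces on each of which $f$ restricts to an isomorphism, so that the scissor relation in $\mathbf{K_0}(\operatorname{Var}_\C)$ immediately yields $[X] = [Y]$.

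In the base case $\di Y = 0$, irreducibility of $Y$ makes it a single reduced point, so the bijection on $\C$-points forces $X$ to be one as well, and $[X] = 1 = [Y]$. For the inductive step, I would first produce a dense open $U \subseteq Y$ such that $f$ restricts to an isomorphism $f^{-1}(U) \xrightarrow{\sim} U$. Bijectivity on $\C$-points makes $f$ dominant, and since $\C$ has characteristic zero and the generic fiber is a single $\C$-point, $f$ is generically étale of degree one, hence birational. Standard birationality then yields a dense open of $Y$ on which $f$ restricts to an isomorphism, and the injectivity of $f$ on all of $X(\C)$ ensures that the preimage of this open coincides exactly with the open locus where $f$ is the isomorphism. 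Hence $[f^{-1}(U)] = [U]$.

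Applying the scissor relation, the claim reduces to showing $[X \setminus f^{-1}(U)] = [Y \setminus U]$, where the two complements are closed of dimension strictly less than $\di Y$ and $f$ still restricts to a bijection on $\C$-points between them. Decomposing them into their irreducible components and applying inclusion-exclusion in $\mathbf{K_0}(\operatorname{Var}_\C)$ expresses both sides as signed sums of classes of subvarieties of strictly smaller dimension, to which the inductive hypothesis applies.

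The main obstacle is precisely this last step: the irreducible components of $X \setminus f^{-1}(U)$ and of $Y \setminus U$ need not correspond naturally under $f$, so one cannot directly read off the equality of classes from the two decompositions. I would handle this by strengthening the induction hypothesis to geometric bijections between arbitrary reduced schemes of finite type (with the motivic class of any constructible set defined via the scissor relation), in which case a simultaneous stratification of source and target into locally closed subvarieties makes the matching and the inductive step transparent.
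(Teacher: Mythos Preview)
The paper does not prove this proposition; it is quoted as a known fact from \cite[Lemma 2.9]{brid}, so there is no in-paper argument to compare against.

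Your outline is the standard proof and is essentially correct. Two small points are worth tightening. First, before invoking generic \'etaleness you should note that $\dim X = \dim Y$: dominance gives $\dim X \ge \dim Y$, and if the inequality were strict the generic fibre would be positive-dimensional, contradicting injectivity on $\C$-points. Second, your diagnosis of the obstacle in the inductive step is exactly right, and your proposed fix---running the induction over reduced finite-type $\C$-schemes (equivalently, constructible sets with their induced reduced structure) rather than integral varieties---is the clean way through; once you do that, the complements $X\setminus f^{-1}(U)$ and $Y\setminus U$ with the restricted map fall directly under the strengthened hypothesis, and no component-matching or inclusion--exclusion bookkeeping is needed.
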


The construction of the motivic measure of constructible subsets of the arc scheme is based on the result \cite[Theorem 7.1]{mea} by Denef and Loeser, where the authors prove that the limit considered in the next definition exists.
As required by their result, we need to work with the localization $\mathbf{M_0} = \mathbf{K_0}(\operatorname{Var}_\C)[\Li^{-1}]$ as we need $\Li$ to be invertible and with the completion $\widehat{\mathbf{M}}_{\mathbf{0}}$ of $\mathbf{M_0}$ with respect to the filtration $F^k\mathbf{M_0}$, where $F^k\mathbf{M_0}$ denotes the subgroup generated by classes of varieties $\displaystyle\frac{[X]}{\mathbb{L}^i}$ with $\dimm X -i \leq -k$. 

\begin{df} \label{def-measure}
Let $X$ be a variety of pure dimension $n$.
A constructible subset in $X_k$ is a union of locally Zariski closed sets in $X_k$ and a constructible subset in the arc scheme $X_\infty$ is a subset of the form $A=\pi_k^{-1}(C_k)$ for $C_k \subset X_k$ constructible and some $k > 0$.
Then, the limit $$ \mu (A) = \lim_{ m\to \infty} \frac{[\pi_m(A)]}{\Li^{\dimm X m}}  = \lim_{m \to \infty} \frac{[\pi_m(A)]}{\Li^{n m}} $$ exists in $\widehat{\mathbf{M}}_{\mathbf{0}}$ and we call it the motivic measure of $A$.
\end{df}

\begin{rmk}
As observed in \cite[Definition-Proposition 3.2]{mea}, in the case of a smooth variety $X$ for $A=\pi_k^{-1}(C_k)$ a constructible set in the arc scheme $X_\infty$ the definition of motivic measure simplifies to $$\mu (A) = \displaystyle\frac{[\pi_k(A)]}{\Li^{n k}}.$$
\end{rmk}

\begin{rmk}
As also discussed in the accompanying result \cite[Definition-Proposition 3.2]{mea} the definition of $\mu$ extends from constructible subsets to a $\sigma$-additive measure $\mu$ on the Boolean algebra of constructible subsets of the arc scheme.  
\end{rmk}

We can now define integrals with respect to this measure as certain converging series in $\widehat{\mathbf{M}}_{\mathbf{0}}$.
\begin{df} \label{def-int}
Let $X$ be a variety and let $A$ be a set in the arc scheme $X_\infty$. Let $F : A \rightarrow \mathbb{N}$ be a function with constructible subsets as fibers, that we call an integrable function. Its motivic integral is then defined as the series $$\displaystyle\int_{A} \Li^{-F} = \displaystyle\sum_{m=0}^\infty \mu(F^{-1}(m)) \Li^{-m} \in \widehat{\mathbf{M}}_{\mathbf{0}}.$$ 
\end{df}

It bears noting that motivic integrals admit the same change of variables formula as ``regular'' integrals.
Before presenting the change of variable formula we need to consider a specific example of an integrable function, the order function, about which we refer the reader to \cite[Chapter 4, Section 4.4.4]{greenbook} for any further detail.

\begin{df} \label{ord}
Let $W$ be a variety and let $V \subseteq W$ be a closed subvariety. 
We consider the integrable function $ord_V : W_\infty \setminus V_\infty \to \mathbb N$ returning the order of arcs along the non-zero coherent sheaf of ideals defining $V$. Concretely, if $\mathcal{I}$ denotes the latter and $p_\gamma = \gamma (\operatorname{Spec}(\C)) \in V$ then $$ord_V (\gamma) = \operatorname{inf} \{ val_t(\gamma^*(f)) \, | \, f \in \mathcal{I}_{p_\gamma} - \{ \mathbf{0} \}  \} $$ where $val_t$ denotes the $t$-adic valuation (or simply valuation). 
We often refer to $ord_{p_\gamma}(\gamma)$ as the valuation of the arc.
\end{df}

\begin{rmk}
From now on we will omit removing $V_\infty$ and just write $W_\infty$ since $\mu(V_\infty) =0$, see \cite[Chapter 7, Section 3, Subsection 3.3]{greenbook}.   
\end{rmk}

\begin{ex}
We can make the order function along subschemes more explicit since we will locally work with affine varieties, as presented in this example. For $V$ an affine variety given by the ideal $I(V) = (f_1 , \dots , f_r) \subseteq \C[x_1, \dots , x_n]$ the order function is explicitly defined as $ord_V : W_\infty \to \mathbb N , \, \gamma \mapsto ord_V (\gamma) = \operatorname{min}_i \{ val_t(f_i(\gamma)) \}$.
\end{ex}

In particular, we consider the order function of the coherent sheaf of ideals corresponding to the relative canonical divisor of a birational transformation and use it as the correcting factor to change variables in motivic integrals.
\begin{thm}[{\cite[Theorem 4.3.1]{greenbook}}] \label{thm-change}
Let $X,Y$ be smooth varieties. Let $h : Y \to X$ be a proper birational morphism and let $h_\infty : X_\infty \to Y_\infty$ be the induced morphism on the arc scheme. Let $K_{ Y | X} = K_Y - h^*(K_X)$. Then, for $A \subseteq X_\infty$ a set and $F :A \to \mathbb N$ an integrable function we have
$$\displaystyle\int_{A} \Li^{-F} d\mu = \displaystyle\int_{h^{-1}(A)} \Li^{-F \circ h_\infty |_{h^{-1}(A)} - ord_{K_{ Y | X}}} d\mu .$$
\end{thm}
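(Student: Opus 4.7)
The plan is to reduce the identity to a statement about the motivic measures of level sets of $ord_{K_{Y|X}}$ via Kontsevich's key lemma, which expresses how the relative canonical divisor measures the defect of $h_\infty$ from being an isomorphism on arcs.

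First I would reduce to working on the complement of a measure-zero set. Since $h$ is proper and birational, there is an open $U \subseteq X$ with $h^{-1}(U) \cong U$, and any arc $\gamma \in Y_\infty$ whose generic point lies in $h^{-1}(U)$ is sent to a unique preimage. The arcs $\gamma \in Y_\infty$ entirely contained in the exceptional locus form a set of measure zero (by dimension count on jet schemes), and similarly for $X_\infty$. Thus $h_\infty$ induces a measurable bijection between $Y_\infty$ and $X_\infty$ up to measure-zero sets, and it suffices to compare measures of measurable subsets on both sides.

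Next I would stratify $Y_\infty$ according to the order along $K_{Y|X}$. Let $Y_\infty^{(e)} = \{\gamma \in Y_\infty : ord_{K_{Y|X}}(\gamma) = e\}$, which is constructible for each $e \geq 0$, and let $A^{(e)} = h_\infty^{-1}(A) \cap Y_\infty^{(e)}$. On $A^{(e)}$ the integrand on the right-hand side equals $\Li^{-F\circ h_\infty - e}$, so expanding both integrals as sums indexed by the value of $F$ reduces the theorem to the single assertion
\[
\mu_X\bigl(h_\infty(B)\bigr) = \Li^{-e}\, \mu_Y(B) \qquad \text{for every constructible } B \subseteq Y_\infty^{(e)} ,
\]
from which the full statement follows by collecting terms in $\Li^{-(m+e)}$.

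The heart of the argument is this equality, and it is the main obstacle. I would prove it by Kontsevich's key lemma: for $n$ sufficiently large (say $n \geq 2e$), the truncation $\pi_n(Y_\infty^{(e)}) \to \pi_n(h_\infty(Y_\infty^{(e)}))$ is a piecewise trivial fibration with fiber $\mathbb{A}^e$. The proof uses local coordinates: near a point of the exceptional locus, choose coordinates $(y_1,\dots,y_n)$ on $Y$ and $(x_1,\dots,x_n)$ on $X$ so that the Jacobian determinant $\det(\partial h_i / \partial y_j)$ generates (locally) the ideal of $K_{Y|X}$; then for an arc $\gamma(t)$ with $ord_{K_{Y|X}}(\gamma) = e$, Taylor expansion of $h$ around $\gamma$ shows that two arcs $\gamma_1, \gamma_2 \in Y_\infty^{(e)}$ have the same image under $h_\infty$ up to order $n$ iff they agree up to order $n - e$. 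Keeping track of constructibility of the strata yields the piecewise triviality. Applying Definition \ref{def-measure}, dividing numerator and denominator by the appropriate power of $\Li$, and passing to the limit in $\widehat{\mathbf{M}}_{\mathbf{0}}$ gives the factor $\Li^{-e}$.

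Finally I would combine everything: substitute the fiberwise measure identity into the expansion of $\int_A \Li^{-F} d\mu$ as a countable sum over values of $F$ and indices $e$, use $\sigma$-additivity of $\mu$ on the partition $\{A^{(e)}\}_{e \geq 0}$, and recognize the result as $\int_{h_\infty^{-1}(A)} \Li^{-F\circ h_\infty - ord_{K_{Y|X}}} d\mu$. Convergence of the resulting series in $\widehat{\mathbf{M}}_{\mathbf{0}}$ follows because the measures $\mu(A^{(e)})$ lie in deeper and deeper pieces of the filtration $F^k \mathbf{M_0}$ as $e$ grows, which is exactly the point of passing to the completion in Definition \ref{def-measure}.
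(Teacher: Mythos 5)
The paper offers no proof of this statement: it is quoted verbatim from the cited reference (Theorem 4.3.1 of the motivic-integration monograph, i.e.\ the Denef--Loeser/Kontsevich change-of-variables formula), so there is nothing internal to compare against. Your outline reproduces the standard proof of that result --- discard the measure-zero arcs in the exceptional locus, stratify $h_\infty^{-1}(A)$ by the value $e$ of $ord_{K_{Y|X}}$, apply Kontsevich's key lemma to get the factor $\Li^{-e}$ on each stratum, and resum using $\sigma$-additivity and completeness of $\widehat{\mathbf{M}}_{\mathbf{0}}$ --- and the reduction of the theorem to the single identity $\mu_X(h_\infty(B)) = \Li^{-e}\mu_Y(B)$ is exactly right. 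One imprecision worth fixing: in your paraphrase of the key lemma, the ``iff'' is too strong. The correct statement is one-directional: if $h_n(\gamma_1)=h_n(\gamma_2)$ on the stratum $ord_{K_{Y|X}}=e$ (with $n\geq 2e$), then $\gamma_1\equiv\gamma_2 \bmod t^{\,n-e+1}$; the converse fails, since otherwise the fiber of $h_n$ would be $\mathbb{A}^{e\dim Y}$ rather than the $e$-dimensional affine space you (correctly) assert. This does not affect the rest of your argument, which uses only the fiber dimension $e$.
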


The change of variable formula just presented will be used in Section \ref{denef} together with the theory of embedded resolution of singularities to directly relate a certain Igusa zeta function to the geometric data of the resolution of the singularity, as well as giving the integral an alternative explicit description since the measure of sets $ord^{-1}_V(m)$ is usually challenging to compute. An explicit formulation obtained applying Theorem \ref{thm-change} to an embedded resolution is what we call a Denef's formula, after \cite{denfor}.

To conclude this section, we use the order function to also introduce a specific class of parametric motivic integrals called Igusa zeta functions, taking values in the ring $\widehat{\mathbf{M}}_{\mathbf{0}}[[T]]$ of formal power series with coefficients in $\widehat{\mathbf{M}}_{\mathbf{0}}$. The formal variable $T$ is usually written as $\Li^{-s}$ for $s$ a formal parameter.
\begin{df} \label{def-igusa}
Let $W$ be a variety and let $V \subseteq W$ be a closed subvariety. Let $A \subseteq W_\infty$ be a constructible subset.
The motivic Igusa zeta function of $V$ over $A$ is defined as $$\displaystyle\int_{A} T^{ord_V} d\mu = \sum_{m = 0}^\infty\mu(ord_V^{-1}(m))T^m \in \widehat{\mathbf{M}}_{\mathbf{0}}[[T]] .$$ 
\end{df}

\subsection{Embedded resolution of singularities} \label{resol}
This study of singularities was developed by Hironaka, converging in his result regarding embedded resolutions. We refer the reader to \cite{resol} for any further detail.

\begin{df}
Let $D$ be a divisor of a fixed variety, that is a closed subvariety of pure codimension $1$, and let $D_i$ for $\in i \in I$ the set of its irreducible components. We say that $D$ has simple normal crossing if the irreducible components of $D$ are smooth and meet transversally.
For $J \subseteq I$ we denote with $D_J =  \displaystyle\bigcap_{j \in J} E_j$ and with $D_J^o = D_J  \setminus \displaystyle\bigcup_{j \not\in J} E_j$.
\end{df}

\begin{df} \label{res}
Let $W$ be a variety and let $V \subseteq W$ be a closed subscheme.
Then, an embedded resolution of the pair $(W,V)$ is a pair $(Y,\pi)$ where $\pi: Y \rightarrow X$ is a proper birational morphism, $Y$ is smooth, and $\pi^{-1}(V)$ is a simple normal crossing divisor on $Y$.
\end{df}
We call strict transform $\widetilde{V}$ of $V$ the component of $\pi^{-1}(V)$ that corresponds to the closure of the dense subset of $V$ where $\pi$ is an isomorphism. We usually denote the rest of the components by $E = \displaystyle\bigcup_{i \in I } E_i$ and we call them exceptional divisors, and we do not consider $\widetilde{V}$ as part of this set.

\begin{thm}[{\cite[Main Theorem II(N)]{resol}}]
Let $\C$ be a field of characteristic zero and $(W,V)$ a variety and closed subvariety respectively over $\C$. Then, there exists an embedded resolution $(Y, \pi)$ of the pair such that it is an isomorphism outside of $W_s \cup V$, where $W_s$ denotes the singular locus of $W$. Moreover, the morphism $\pi$ is a composition of blow-ups with smooth centers. 
\end{thm}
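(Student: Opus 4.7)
The plan is to follow the strategy originating with Hironaka and refined by Villamayor, Bierstone--Milman, and Włodarczyk: reduce embedded resolution to \emph{principalization of ideal sheaves}, then prove principalization by induction on the dimension of the ambient smooth variety via \emph{maximal contact}. More precisely, I would first argue that given a principalization algorithm for arbitrary coherent ideal sheaves on smooth varieties, one obtains embedded resolution of $(W,V)$ as follows. One can assume $W$ smooth after desingularizing $W$ itself (applying principalization to the ideal of the singular locus of an ambient embedding, then extracting the strict transform); then principalizing the ideal $\mathcal{I}_V \subseteq \mathcal{O}_W$ by a sequence of blow-ups with smooth centers contained in the successive singular loci of $\mathcal{I}_V$ converts $\pi^{-1}(V)$ into a simple normal crossing divisor, and the centers automatically lie over $W_s \cup V$ because outside this locus $\mathcal{I}_V$ already cuts out a smooth hypersurface.

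The next step is to attach a well-ordered local invariant to each point $x \in W$, typically a lexicographic tuple whose leading entry is the Hironaka order $\nu_x(\mathcal{I})$ and whose remaining entries record auxiliary orders on iteratively restricted coefficient ideals together with bookkeeping of the old exceptional divisors. I would then prove that the locus where this invariant attains its maximum is a smooth closed subvariety, choose this as the blow-up center, and show that the maximum strictly decreases in the lexicographic order. The dimension induction is powered by \emph{maximal contact}: in characteristic zero, a Tschirnhaus transformation produces, locally near a point of maximal order $\nu$, a smooth hypersurface $H$ containing the locus where $\mathcal{I}$ has order $\geq \nu$, and the problem restricts to principalizing the coefficient ideal built from $\mathcal{I}$ and its derivatives of order $<\nu$ along $H$, which lives on a smooth variety of one lower dimension.

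Finally I would verify termination and globality. Termination follows because the lexicographic invariant is well-ordered and strictly decreases under each blow-up while the tail of exceptional data is controlled combinatorially; after finitely many steps the order drops to $0$, meaning $\mathcal{I}$ has become locally principal with simple normal crossing support. Globality requires showing that although $H$ is only local, the coefficient ideal one extracts and hence the center of the next blow-up are \emph{canonical}, independent of the choice of maximal contact hypersurface, and therefore patch to a global smooth center on $W$; functoriality under smooth morphisms then upgrades this to an algorithm on the whole variety.

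The main obstacle is precisely this canonicity/patching issue: two different local choices of maximal contact hypersurface yield a priori different coefficient ideals, and one must prove their principalization processes agree. The standard resolution of this obstacle is to replace the coefficient ideal by its \emph{$\mathcal{D}$-saturation}, i.e.\ the ideal generated by all derivatives of $\mathcal{I}$ of order $<\nu$, which is manifestly intrinsic; verifying that the resulting invariant descends to a well-defined function on $W$ that is upper semicontinuous and compatible with smooth pullback is the technical heart of the argument, and it is here that characteristic zero enters essentially through the unrestricted use of differential operators of arbitrary order.
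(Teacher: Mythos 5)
This statement is not proved in the paper at all: it is imported verbatim as a citation of Hironaka's Main Theorem II(N) from \cite{resol} and used as a black box, so there is no ``paper's proof'' to compare against. Your outline follows the modern streamlined route (Villamayor, Bierstone--Milman, W\l odarczyk, as in Koll\'ar's book) --- reduction to principalization, a lexicographic upper semicontinuous invariant, maximal contact via a Tschirnhaus hypersurface, $\mathcal{D}$-saturated coefficient ideals to make the centers canonical, and termination by well-ordering --- which is a genuinely different and considerably cleaner argument than Hironaka's original one via normal flatness and idealistic exponents; the modern approach buys functoriality and canonicity of the centers, which Hironaka's original proof does not directly provide. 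Your roadmap is essentially correct, including the observations that the centers stay over $W_s\cup V$ (outside $V$ the ideal $\mathcal{I}_V$ is the unit ideal, so no center can meet that locus --- your phrasing ``cuts out a smooth hypersurface'' there is slightly off) and that characteristic zero enters through unrestricted differentiation. Be aware, however, of two points: first, principalization of $\mathcal{I}_V$ makes the \emph{total} transform an SNC divisor, whereas embedded resolution in the sense of Definition~\ref{res} also requires one to stop before blowing up the strict transform $\widetilde V$ and to check that $\widetilde V$ is smooth and transverse to the exceptional locus at that stage --- a standard but necessary extra step; second, every hard ingredient (smoothness of the maximal locus of the invariant, strict lexicographic decrease under blow-up, independence of the choice of maximal contact) is only named, not proved, so what you have is a correct architecture for a proof rather than a proof.
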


By utilizing the transversality condition, we can deduce certain numerical information of the singularity as presented in \cite[Chapter 7, Theorem 3.3.4]{greenbook}. Before that, we recall the notion of multiplicity of a non-zero coherent sheaf of ideals at a point.
\begin{df}
We recall that given a variety $X$ and a point $p \in X$, the multiplicity of a non-zero coherent sheaf of ideals $\mathcal{I}$ on $X$ at $p$ is defined as $$ \operatorname{max} \{ \mu \, | \, \mathcal{I}_p \subseteq m_p^\mu \} $$ where $m_p \subseteq \mathcal{O}_{X, p}$ denotes the maximal ideal of $p$.
\end{df}

\begin{df} \label{ddef-data} \label{mult}
Let $\pi: (Y, E) \rightarrow (W,V)$ be an embedded resolution of the pair $(W,V)$ as in Definition \ref{res} and let $\mathcal{I}$ be the non-zero coherent sheaf of ideals determining $V$.
For every $i \in I$ we denote by $N_i$ the multiplicity of $\mathcal{I} \, \mathcal{O}_Y$ along any smooth point of $E_i$ and by $\nu_i$ the multiplicity of $K_{Y | W}$ along any smooth point of $E_i$. These are respectively called multiplicity and discrepancy of $E_i$.
\end{df}

We conclude the section with a classic example, whose details can be found in \cite[Example 3.1.9]{greenbook}.
\begin{ex} \label{ex-cusp}
We briefly recall the resolution of singularities of the cusp $f = x_1^2 - x_2^3 \in \C[x_1, x_2]$. One can check that three blowups are needed, thus obtaining three exceptional divisors in addition to the strict transform of the curve for which the multiplicities and discrepancies are $(N_i, \nu_i)_{i=1,2,3} = (2, 1),(3, 2),(6, 4)$.
\end{ex}

\subsection{The Hilbert scheme of points of a variety}

The Hilbert scheme of points is defined as the moduli space parameterizing zero-dimensional subschemes of a given one with fixed colength, originally introduced in \cite{EGA}. We recall its definition starting with the construction as a functor of points and then present its representability in the category $\operatorname{Sch}_\C$ of schemes.

\begin{df} \label{def-fun}
Let $X$ be a scheme. For any $n \geq 1$ we consider the functor $H_X^n : \operatorname{Sch}_\C^{op} \to \operatorname{Set}$ 
defined on objects of $\operatorname{Sch}_\C$ as 
$$ S \mapsto H_X^n(S) = \{ Z \subseteq X \times S \, | \,  p|_Z : X \times S \to S \text{ finite, flat and surjective of degree } n \}$$ and on morphisms given $f: T \to S$ morphism in $\operatorname{Sch}_\C$ as 
\begin{center}
    $H_X^n(f) : H_X^n(S) \to H_X^n(T), \,  Z \mapsto (id_X , f)^{-1} (Z) = \,$ pullback with respect to $(id_X , f) : X \times T \to X \times S$.
\end{center}
\vspace{2mm}       
\begin{center}
\begin{tikzcd}
Z\times_S T \subseteq X \times T \arrow[d, "p_1"'] \arrow[r, "p_2"] & T \arrow[d, "f"] \\
Z \subseteq X \times S \arrow[r, "p|_Z"] & S    
\end{tikzcd}
\begin{tikzcd}
{(x,t)} \arrow[d, maps to] \arrow[r, maps to] & t \arrow[d, maps to]\\
{z=(x,s)} \arrow[r, maps to] & s=f(t)
\end{tikzcd}
\end{center}
\end{df}

The representability of the so-called Hilbert functor $H_X^n$ has been proved in \cite[Theorem 3.2]{EGA}. This is a key fact, as our primary focus will rely on the scheme representing it.

\begin{df} \label{def-hilb}
We denote by $\Hilb^n(X)$ the scheme representing the functor $H_X^n$ and we call it the Hilbert scheme of points of $X$ of colength $n$.
The universal subscheme is the subscheme $\mathcal{Z} \in H_X^n(\Hilb^n(X))$ corresponding to $id_{\Hilb^n(X)}$ under representability. 
\end{df}

\begin{rmk}
Using the representability we have a correspondence $$pt \mapsto \mathcal{Z} \cap ( X \times \{ pt \})$$ between closed points of $\Hilb^n(X)$ and zero-dimensional subschemes $Z \subseteq X$ with finite length equal to $\dimm \mathcal{O}_Z = n$.
One can view these subschemes as points on the Hilbert scheme.
\end{rmk}

\begin{df}
    Given a zero-dimensional finite-length subscheme $Z \in \Hilb^n(X)$ we call its support the set of points $|Z| = \{ x_1 , \dots , x_k\}$ where the scheme is geometrically supported, and we define the cluster associated to $Z$ as the $0$-cycle $[Z] = \displaystyle\sum_{x_i \in |Z|} \dimm \mathcal{O}_{Z, x_i} \, x_i$.
\end{df}

Constraining the support, we can define punctual schemes and {the punctual Hilbert scheme of points} as the ones with support concentrated in one point of higher multiplicity. 

\begin{df} \label{d-h}
Let $X$ be a scheme and let $p \in X$. We denote by $$\Hilb_p^n(X) = \{ Z \in \Hilb^n(X) \, | \, |Z| = \{ p \} \}$$ the subscheme of $\Hilb^n(X)$ made of the zero-dimensional subschemes of length $n$ that are supported entirely at the point $p$.
\end{df}

\begin{rmk}
From now on, we will omit the word ``punctual'' and we will always refer to the Hilbert scheme as the one of Definition \ref{d-h}.  
\end{rmk}

The Hilbert scheme still has a scheme structure since it can be equivalently defined as a fiber of the Hilbert-Chow morphism $Z \mapsto |Z|$. We refer to \cite{HC} for the construction of this morphism.

Returning to our local case of interest which is affine varieties and hypersurfaces, the Hilbert scheme can be described in an even clearer way.

\begin{ex} \label{ex-ide}
Given an affine variety $(X,0)$ the Hilbert scheme takes the form 
 $$\Hilb^n_0(X)=\{ I \subseteq  \mathcal{O}_{X,0} \text{ ideal} \, | \dimm \mathcal{O}_{X,0} / I = n\}.$$  
Moreover, for $X = \{ f=0 \}$ an affine hypersurface given by $f \in \C[x_1 , \dots , x_n]$ we have $$\Hilb^n_0(X) = \{ J \subseteq \C[x_1 , \dots , x_n] \text{ ideal} \, | \, f \in J \, , \, J \subseteq m \, , \, \dimm \C[x_1 , \dots , x_n] / J = n \}.$$
\end{ex}

\section{Curvilinear Hilbert schemes and motivic integration} \label{curv}

We now move onto a specific type of points of the Hilbert scheme, the {curvilinear} ones, mentioned already in Section \ref{intro}.
The definition of curvilinear schemes appears for example in \cite{Berczi}, which we refer the reader to.

\begin{df} \label{def-chilb}
Let $(X,0)$ be a variety and $Z \in \Hilb_0^k(X)$. We say that $Z$ is curvilinear if it lies on a smooth curve $C_Z \subseteq X$, or equivalently if $$\mathcal{O}_{Z} \cong \C[t] / (t^k)$$ are isomorphic as $\C$-algebras.
The open consisting of curvilinear schemes is called curvilinear Hilbert scheme and is denoted by $\CHilb_0^k(X)$.
\end{df}

As shown in \cite[Section 3]{Berczi}, the curvilinear part $\CHilb_0^k(X)$ is an open subscheme of the Hilbert scheme of points $\Hilb_0^k(X)$.
We now prove a well-known property that characterizes the curvilinear schemes of $\mathbb A^n$, which will also be useful to control the generators of the ideals of Example \ref{ex-ide}. We work with completions as explained in Remark \ref{rmk-compl}. 

\begin{lemma} \label{prop-chilb}
Let $I  = (g_1, \dots , g_r)  \in \CHilb^k_0(\mathbb A^n)$. Then, $I \not\subseteq m^2$ where $m = (x_1, \dots , x_n) \subseteq \C[[x_1, \dots , x_n]]$ denotes the maximal ideal in the ambient coordinate ring. In particular, for degree reasons, there exist $n-1$ linearly independent generators $g_i \in m \setminus m^2$ of $I$ and $g_j \in m^n$ for any remaining $j$.
\end{lemma}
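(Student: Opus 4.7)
The plan is to compute the Zariski cotangent space of $\mathcal{O}_Z$ in two different ways. Working in the completion $A = \C[[x_1, \dots, x_n]]$ with maximal ideal $m$, the curvilinearity hypothesis $\mathcal{O}_Z = A/I \cong \C[t]/(t^k)$ (with $k \geq 2$; the case $k=1$ is trivial) identifies
$$m/(I + m^2) \;\cong\; (t)/(t^2) \;\cong\; \C.$$
Since $\dim_\C m/m^2 = n$, this forces $\dim_\C (I + m^2)/m^2 = n - 1$, which is strictly positive for $n \geq 2$ and already gives $I \not\subseteq m^2$.

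Next I would establish the generator statement by dimension counting (the ``degree reasons'' in the statement). The canonical isomorphism $(I + m^2)/m^2 \cong I/(I \cap m^2)$ allows me to select $g_1, \dots, g_{n-1}$ among the generators of $I$ whose classes modulo $m^2$ form a basis of the $(n-1)$-dimensional subspace $(I + m^2)/m^2 \subseteq m/m^2$; by construction these lie in $m \setminus m^2$ and are linearly independent modulo $m^2$. For any remaining generator $g_j$, its image in $m/m^2$ is a $\C$-linear combination of $\bar{g}_1, \dots, \bar{g}_{n-1}$, so after subtracting this combination off I may assume $g_j \in m^2$.

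To sharpen ``$g_j \in m^2$'' to membership in a higher power of $m$ as required, I would use that $g_1, \dots, g_{n-1}$ have $\C$-linearly independent linear parts and hence form part of a regular system of parameters of the regular local ring $A$. After a formal change of coordinates one may take $g_i = x_i$ for $i \leq n - 1$; then $A/(g_1, \dots, g_{n-1}) \cong \C[[x_n]]$ must surject onto $\mathcal{O}_Z \cong \C[t]/(t^k)$ with further kernel $(x_n^k)$ by comparing lengths. Each remaining $g_j$ therefore lies in $(x_1, \dots, x_{n-1}, x_n^k)$, yielding the desired high-order membership. The only nontrivial ingredient is the extension to a regular system of parameters in the last step, which is standard for regular local rings once linear independence in $m/m^2$ has been verified; the rest is elementary dimension arithmetic in cotangent spaces, and no deep obstacle arises.
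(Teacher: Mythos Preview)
Your cotangent-space argument for $I \not\subseteq m^2$ is correct and takes a different route from the paper: there the authors invoke the alternative characterisation of curvilinearity (the subscheme lies on a smooth curve $C_Z$) and apply the Jacobian criterion to $I_{C_Z} \subseteq I$ to exhibit elements with nonzero linear part, whereas you read off $\dim_\C (I+m^2)/m^2 = n-1$ directly from the isomorphism $A/I \cong \C[t]/(t^k)$. Your approach is more intrinsic and delivers the exact count $n-1$ in one stroke rather than as a byproduct.

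There is a gap in your final step. After the coordinate change you have $I = (x_1,\dots,x_{n-1},x_n^k)$, but the observation ``each remaining $g_j$ lies in $(x_1,\dots,x_{n-1},x_n^k)$'' is just the tautology $g_j \in I$ and carries no $m$-adic information: for instance $x_1 x_n \in I \cap m^2$ yet lies in no higher power of $m$. What actually closes the argument is that the lemma only asserts the \emph{existence} of such a generating set, so one may simply replace the original generators by $\{x_1,\dots,x_{n-1},x_n^k\}$, whose sole remaining generator $x_n^k$ lies in $m^k$. (The paper's own argument at this same point is comparably imprecise: it claims that $g_j \in m^{l-1}\setminus m^l$ forces $\phi(g_j)\in (t^{l-1})\setminus(t^l)$, but $\phi$ kills $g_1,\dots,g_{n-1}$ and can therefore strictly raise order. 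Note also that the exponent in the lemma should presumably read $k$ rather than $n$, since $x_n^k \notin m^n$ when $k<n$.)
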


\begin{proof}
By definition, there exists a smooth curve $C$ where the subscheme corresponding to $I$ lies. Let $I_C = (f_1, \dots , f_r)$ be the ideal defining $C$. Again by definition, we have $I_C \subseteq I$, and we then know by the jacobian criterion that $f_i \in m \setminus m^2$ for some $n-1$ distinct $i$. So, $f_i \in I$ implies $I \not\subseteq m^2$.

Then, we now know that $n-1$ generators of $I$ are linear, say $g_1 , \dots , g_{n-1}$ for simplicity. Consider any generator $g_j$ with $j \neq 1, \dots n-1$. Then, again for the jacobian criterion we have that $$\C[[x_1, \dots , x_n]] / (g_1 , \dots , g_{n-1}) \cong \C[[t]]$$ for a certain isomorphism $\phi$ and it follows that $n \leq \dimm \C[[x_, \dots , x_n]] / (g_1 , \dots , g_{n-1}, g_j )$ and $\phi(m)=(t)$.
If $g_j \in m^{n-1} \setminus m^n$ then $\phi(g_j) \in (t^{n-1}) \setminus (t^n)$ and $$\dimm \C[[x_1, \dots , x_n]] / (g_1 , \dots , g_{n-1}, g_j ) \leq n-1$$ which is a contradiction.
\end{proof}

\subsection{The non-reductive quotient structure of the jet scheme and their motivic classes} \label{the jet scheme} \label{31}

To understand the relationship between motivic integrals and Hilbert schemes, we need to start with jet schemes since these are the ``building blocks'' of motivic integrals as observed in Section \ref{back}. In particular, we need to construct and investigate in detail the motivic class of a certain geometric quotient of the jet schemes of the given variety.

\begin{df}
Let $(X,0)$ be a variety. We consider the set in $X_k$ of smooth, punctual $k$-jets $$S_0^k(X)=\{ \phi : \operatorname{Spec} (\C[t] / (t^k)) \hookrightarrow X \, \text{closed embedding} \, | \, \phi ( \operatorname{Spec}(\C)) = 0 \} .$$
\end{df}

By utilizing the algebraic characterization presented in Definition \ref{def-chilb} in terms of isomorphic $\C$-algebras, we can swiftly recognize how significant it is to focus on the smooth curve where curvilinear schemes lie.
We specifically use the jet scheme to describe curves and represent curvilinear schemes, and it is enough to consider them up to smooth reparameterization since it does not affect those curves as geometric loci.

\begin{df}  \label{def-quot} \label{action}
Let $(X,0)$ be a variety.
Noticing that any $\alpha \in (t) \setminus (t^2) \subseteq \C[t]/(t^k)$ induces the ring automorphism $\alpha$ of $\C[t] / (t^k)$ defined by $$t \mapsto \alpha$$ we denote by $R^k_{s,0}$ the group of such automorphisms.
 On $S_0^k(X)$ we consider the action induced by $R^k_{s,0}$ and the composition defined as 
    \begin{center}
        $\bullet : R^k_{s,0} \times S_0^k(X) \to S_0^k(X)$, $(\alpha, \phi) \mapsto \phi \bullet \alpha = \phi \circ \operatorname{Spec}(\alpha)$. 
    \end{center}    
\end{df}

Since throughout the rest of this paper we will work locally around a fixed point of a smooth ambient variety, eventually working in local coordinates is enough.

\begin{rmk} \label{rmk-compl}
To choose local coordinates around a point of a smooth variety (or more in general, smooth at the considered point) we can proceed as follows.
Given $(X,0)$ a smooth variety of dimension $n$, 
we can consider its completed local coordinated ring and being smooth equivalently means that $$\widehat{\mathcal{O}}_{X,0} \cong \C [[x_1, \dots , x_n]].$$ 
Observing that the Hilbert scheme is invariant under the choice of working with the local coordinate ring or its completed version, we can consider the latter and the regular functions $x_1, \dots , x_n$ as coordinates around $0$ on $X$. Indeed, any morphism $Z \to \operatorname{Spec}(\mathcal{O}_X) \times S$ factors through $\operatorname{Spec}(\widehat{\mathcal{O}}_{X,0}) \times S$ and $Z \to \operatorname{Spec}(\widehat{\mathcal{O}}_{X,0}) \times S$ is still a closed immersion.
\end{rmk}

\begin{rmk} \label{loc}
Assuming $X$ smooth and choosing local coordinates $x_1, \dots , x_n$ for $X$ around $0$ the elements of $S_0^k(X)$ are described equivalently as morphisms $$ \phi: \mathcal{O}_{X,0} \twoheadrightarrow \C[t] / (t^k)$$ corresponding to $n$-tuples $(\phi_1 = \phi(x_1) , \dots , \phi_n = \phi(x_n))$ of truncated power series in $\C[t] / (t^k)$ with zero degree term equal to $0$. This means that $$\phi_i = a_1^i t + \dots + a_{k-1}^i t^{k-1}$$ for $i=1, \dots, n$.
For this reason, we can think of these the jet scheme as ``starting'' at $0 \in X$. 
In addition to this, we observe that the surjectivity condition corresponds to the constraint $$(\phi_1 , \dots , \phi_n) \in (t)^n \setminus (t^2)^n \subseteq (\C[t] / (t^k))^n$$ which translates to the smoothness of the geometric locus parameterized by $\phi$. This means that $$a_1^i \neq 0$$ for some $i=1, \dots, n$.
For this reason, we will equivalently call smooth a surjective jet.

Moreover, for $\alpha \in R^k_{s,0}$ the action $\bullet$ translates to the composition of $\phi_i$ with $\alpha$ for $i=1, \dots, n$.
\end{rmk}

Moreover, we already know from Example \ref{the jet scheme-aff} that elements of the jet scheme correspond to their coefficients and the action takes the following multiplication-by-matrix form, as presented in \cite[Lemma 3.4 and Remark 3.5]{Berczi} or \cite[Section 5]{berkir} alternatively for $J_k(1,n)(X)$ and that we restate for $X_k$.

\begin{lemma} \label{prop-action}
    Let $k \geq 2$ and let $x_1, \dots , x_n$ be local coordinates for $X$ around $0$ a smooth variety of dimension $n$.
    Let $\phi \in S_0^k(X)$ and $\alpha \in R^k_{s,0}$ an automorphism of $\C [t] / (t^k)$.   
    Then, identifying $\phi$ with the matrix of its coefficients $(a_j^i) \in (\C^k)^n$, the action $\bullet$ of Definition \ref{action} translates into the matrix multiplication
\vspace{1mm}
$$ 
(p_1, \dots , p_n) \bullet \alpha = 
\begin{pmatrix}
a_1^1 & a_2^1 & \dots & a_{k-1}^1\\
a_1^2 & a_2^2 & \dots & a_{k-1}^2\\
\dots & & & \dots\\
a_{k-1}^n & a_{k-1}^n & \dots & a_{k-1}^n
\end{pmatrix}
\bullet
\begin{pmatrix}
\alpha_1 & \alpha_2 & \dots & \alpha_{k-1}
\end{pmatrix}
=$$
\vspace{2mm}
$$
\begin{pmatrix}
a_1^1 & a_2^1 & \dots & a_{k-1}^1\\
a_1^2 & a_2^2 & \dots & a_{k-1}^2\\
\dots & & & \dots\\
a_{1}^n & a_{2}^n & \dots & a_{k-1}^n
\end{pmatrix}
\cdot
\begin{pmatrix}
\alpha_1 & \alpha_2 & \alpha_3 & \dots & \alpha_{k-1}\\
0 & \alpha_1^2 & 2 \alpha_1 \alpha_2 & \dots & \dots\\
\dots & &  & & \dots\\
0 & 0 & \dots & \dots & \alpha_1^{k-1}
\end{pmatrix}
$$
and we denote by $\alpha^{i , j} = \displaystyle\sum_{a_1 + \dots + a_i = j} \alpha_{a_1} \dots \alpha_{a_i}$ the $(i,j)$-entry of the matrix corresponding to $\alpha$.
\end{lemma}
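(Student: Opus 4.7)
The plan is to reduce the claim to a direct power-series computation, using the local description of jets from Remark \ref{loc}.

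First, I would make the action fully explicit at the ring level. By Definition \ref{action}, $\phi \bullet \alpha = \phi \circ \operatorname{Spec}(\alpha)$, which on coordinate rings is the $\C$-algebra homomorphism sending $x_i \mapsto \phi_i(\alpha)$, where $\alpha = \alpha_1 t + \alpha_2 t^2 + \cdots + \alpha_{k-1} t^{k-1} \in (t)\setminus(t^2)$ and $\phi_i = a_1^i t + a_2^i t^2 + \cdots + a_{k-1}^i t^{k-1}$. Thus the new coefficient tuple of $\phi \bullet \alpha$ is obtained by expanding each $\phi_i(\alpha)$ modulo $t^k$ and reading off the coefficients of $t^1, \dots, t^{k-1}$.

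Second, I would compute the powers of $\alpha$. For $1 \le l \le k-1$, expanding the product and collecting terms gives
$$\alpha^l = \sum_{j \ge l} \Bigl( \sum_{a_1+\cdots+a_l = j,\, a_r \ge 1} \alpha_{a_1}\cdots\alpha_{a_l} \Bigr) t^j = \sum_{j = l}^{k-1} \alpha^{l,j}\, t^j \pmod{t^k},$$
with $\alpha^{l,j} = 0$ whenever $j < l$ (since each $a_r \ge 1$ forces $j \ge l$). Substituting this into $\phi_i(\alpha) = \sum_{l=1}^{k-1} a_l^i\, \alpha^l$ and swapping the order of summation yields
$$\phi_i(\alpha) = \sum_{j=1}^{k-1} \Bigl( \sum_{l=1}^{k-1} a_l^i \, \alpha^{l,j} \Bigr) t^j \pmod{t^k}.$$

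Third, I would recognize the coefficient of $t^j$ in $\phi_i(\alpha)$ as exactly the $(i,j)$-entry of the product $A \cdot M(\alpha)$, where $A = (a_l^i)$ is the coefficient matrix of $\phi$ and $M(\alpha)$ is the matrix with $(l,j)$-entry $\alpha^{l,j}$. The upper-triangular shape of $M(\alpha)$ displayed in the statement is then automatic from the vanishing $\alpha^{l,j} = 0$ for $j < l$, while the diagonal entries $\alpha^{l,l} = \alpha_1^l$ and the entries just above the diagonal match the explicit values written out. This establishes the claimed identification of $\bullet$ with matrix multiplication.

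The argument is essentially a bookkeeping exercise; the only delicate point is keeping track of indices and ensuring the multinomial sum defining $\alpha^{l,j}$ is the correct coefficient of $t^j$ in $\alpha^l$, which is immediate once one writes $\alpha^l$ as an $l$-fold product and expands.
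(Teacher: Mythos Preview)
Your argument is correct: the reparameterization action on rings is exactly the substitution $t \mapsto \alpha(t)$, and expanding $\phi_i(\alpha)$ via the multinomial coefficients $\alpha^{l,j}$ immediately gives the matrix product as stated. The paper itself does not supply a proof of this lemma; it records it as a restatement of \cite[Lemma 3.4 and Remark 3.5]{Berczi} (equivalently \cite[Section 5]{berkir}) translated from the $J_k(1,n)$ setting to $X_k$, so your direct computation is precisely the kind of verification the paper defers to those references.
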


\begin{rmk}
As pointed out in \cite{Berczi} the subgroup of these matrices corresponds to a certain $\mathbb G_m$-extension of the maximal unipotent radical $U$ of $R^k_{s,0}$ that is obtained imposing $\alpha_1 =1$. Therefore, considering the semi-direct group structure of $R^k_{s,0}$ and the underlying product space, this space is isomorphic to $\mathbb G_m \times \mathbb A^{k-2}$ as a variety thus $$[R^k_{s,0}] = [ \mathbb G_m \times \mathbb A^{k-2}] = (\Li - 1)\Li^{k-2}.$$ 
\end{rmk}

Some attention is needed to perform the quotient of $S_0^k(X)$ by the action of Definition \ref{def-quot} and Lemma \ref{prop-action} since the action is non-reductive. In particular, we need to ensure we obtain a geometric quotient in the sense of \cite{mum}.
The precise theory and construction of geometric quotients of the jet scheme by the non-reductive group of smooth reparameterizations is developed in \cite{berkir} and we avoid recalling any detail here. 
\begin{df}
We denote by $Q_0^k(X)$ the geometric quotient of $S_0^k(X)$ by the action $\bullet$, that is by $R^k_{s,0}$.
\end{df}

However, even the notion of geometric quotients is generally not enough to obtain information about the motivic classes of quotient spaces. We are especially interested in equipping the quotient map with the structures of principal bundle and Zariski locally trivial fibration as we aim to use Proposition \ref{fib} to prove relationships between motivic classes of the jet scheme $S^k_0(X)$ and its quotient by the action $\bullet$.
We start by noticing that from the description of Lemma \ref{prop-action} the action is scheme-theoretically free. 

\begin{prp}
The action described in Lemma \ref{prop-action} is scheme-theoretically free.
\end{prp}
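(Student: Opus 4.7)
The plan is to prove scheme-theoretic freeness by showing that for every test scheme $T$ and every $T$-point $\phi$ of $S_0^k(X)$, the stabilizer subgroup scheme of $\phi$ under $\bullet$ reduces to the identity section — equivalently, that the action morphism
$$R^k_{s,0}\times S_0^k(X)\to S_0^k(X)\times S_0^k(X),\qquad (\alpha,\phi)\mapsto (\phi\bullet\alpha,\phi)$$
is a monomorphism. I would carry this out functorially so that the argument works over an arbitrary base $T$, not merely on $\C$-points.

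By Remark \ref{rmk-compl} I may choose local coordinates $x_1,\dots,x_n$ and, as in Remark \ref{loc}, encode a $T$-point $\phi$ of $S_0^k(X)$ as a matrix $(a_j^i)$ with entries in $\mathcal{O}_T$. The fact that $\phi$ defines a closed immersion $\operatorname{Spec}(\mathcal{O}_T[t]/(t^k))\hookrightarrow X\times T$ over $T$ translates into the property that the linear coefficients $(a_1^1,\dots,a_1^n)$ generate the unit ideal in $\mathcal{O}_T$. Given a $T$-point $\alpha\in R^k_{s,0}(T)$ satisfying $\phi\bullet\alpha=\phi$, the matrix identity of Lemma \ref{prop-action} yields, column by column, the system
$$\sum_{\ell} a_\ell^i\,\alpha^{\ell,j}=a_j^i,\qquad i=1,\dots,n,\ j=1,\dots,k-1.$$
I would then proceed by induction on $j$. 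The column $j=1$ gives $(\alpha_1-1)\,a_1^i=0$ for all $i$, so the unit-ideal condition on the $a_1^i$ forces $\alpha_1=1$ in $\mathcal{O}_T$. Assuming inductively $\alpha_1=1,\ \alpha_2=\cdots=\alpha_{j-1}=0$, we have $\alpha(t)\equiv t+\alpha_j t^j\pmod{t^{j+1}}$, hence $\alpha(t)^\ell\equiv t^\ell+\ell\,\alpha_j\,t^{\ell+j-1}\pmod{t^{\ell+j}}$; reading off the $j$-th column this collapses to $a_1^i\,\alpha_j=0$ for all $i$, and the unit-ideal condition again forces $\alpha_j=0$. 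Thus $\alpha=\mathrm{id}$.

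The only delicate point is the bookkeeping of $\alpha^{\ell,j}$ at the inductive step and the insistence on working over a general base, since we need the implication ``$a_1^i\,\alpha_j=0$ for all $i$ $\Rightarrow$ $\alpha_j=0$'' to come from the fact that the $a_1^i$ generate the unit ideal of $\mathcal{O}_T$, not merely from pointwise non-vanishing. Alternatively, one may bypass this calculation altogether by the following conceptual shortcut: since $\phi$ is a closed immersion of $T$-schemes, it is in particular a monomorphism; the action is precomposition by $\operatorname{Spec}(\alpha)\times\mathrm{id}_T$, so the stabilizer equation $\phi\circ(\operatorname{Spec}(\alpha)\times\mathrm{id}_T)=\phi=\phi\circ\mathrm{id}$ immediately yields $\operatorname{Spec}(\alpha)=\mathrm{id}$, whence $\alpha=\mathrm{id}$. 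Either approach delivers triviality of every stabilizer, which is exactly scheme-theoretic freeness of the action.
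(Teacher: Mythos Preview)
Your argument is correct and clean for the statement that the action has trivial scheme-theoretic stabilizers, i.e.\ that the graph morphism $\Psi:R^k_{s,0}\times S_0^k(X)\to S_0^k(X)\times S_0^k(X)$ is a monomorphism. Both your inductive computation and the conceptual shortcut via ``$\phi$ is a closed immersion, hence a monomorphism'' are valid, and the latter is more elegant than anything in the paper.

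However, the paper interprets \emph{scheme-theoretically free} in Mumford's sense: it proves that $\Psi$ is a \emph{closed immersion}, not merely a monomorphism. This is the hypothesis actually invoked in the next step, where \cite[Proposition~0.9]{mum} is applied to deduce that the quotient map is a principal bundle. Your proof establishes only injectivity of $\Psi$ on $T$-points, which does not by itself give a closed immersion (an open immersion is a monomorphism too). The paper closes this gap by working on the Zariski cover $U_j=\{a_1^j\neq 0\}$ of $S_0^k(X)$ and writing down, for a pair $(p,q)=(p,p\bullet\alpha)$ in the image, explicit polynomial formulas $\alpha_1=b_1^j/a_1^j$, $\alpha_2=(b_2^j-a_2^j\alpha_1^2)/a_1^j$, \dots\ recovering $\alpha$ algebraically; this exhibits a regular retraction of $\Psi$ over each $U_j$ and hence shows $\Psi$ is a closed immersion.

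Your computation is essentially the same solving-for-$\alpha$ carried out over a general base, so the fix is immediate: instead of only concluding $\alpha=\mathrm{id}$ from $\phi\bullet\alpha=\phi$, run the identical induction with $\phi\bullet\alpha=\psi$ for a second $T$-point $\psi$, on the open where $a_1^j$ is a unit, to produce the local inverse. The conceptual shortcut, by contrast, does not obviously upgrade to give the closed-immersion statement.
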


\begin{proof}
First of all, the action explicitly takes the form
$$ 
(p_1 , \dots , p_n) \cdot \alpha = 
\begin{pmatrix}
a_1^1 \alpha_1 & a_1^1 \alpha_2 + a_2^1 \alpha_1^2 & \dots & \sum_i a^1_i \alpha^{i , k-1}\\
a_1^2 \alpha_1 & a_1^2  \alpha_2 + a_2^2 \alpha_1^2  & \dots & \dots\\
\dots & & & \dots\\
a_{1}^n \alpha^{i , k-1} & a_{1}^n  \alpha_2 + a_2^n \alpha_1^2  & \dots & \sum_i a_i^n \alpha^{i , k-1}
\end{pmatrix}
$$
and these polynomial equations determine the image of the natural map $$\Psi : S_0^k(\mathbb A^n)  \times R_{s,0}^k \to S_0^k(\mathbb A^n)  \times S_0^k(\mathbb A^n).$$ Then, we can observe that for $a_1^j \neq 0$ for some $j$ the equation $a_1^j \alpha_1 = a_1^j$ implies $\alpha_1 =0$ and the equations $a_1^j \alpha_2 + a_2^j \alpha_1^2 = a_2^j ,  \, \dots \, , \displaystyle\sum_i a_i^j \alpha^{i , k-1} = a_n^j$ in chain imply $\alpha_2=0, \, \dots \, , \alpha_{k-1}=0$. This means the action is set-theoretically free.
Moreover, the map $\Psi$ is injective 
and so to prove that it is a closed immersion we have to prove it admits a local inverse on its image. In particular, following the notation of Definition \ref{prop-action} on the open set $U_j$ determined by the condition $a^j_1 \neq 0$ we construct the inverse of $\Psi$. 
Given a pair $(p,q) \in \operatorname{Im}(\Psi)$ and denoting by $a_i^j, b_i^j$ the coefficients of $p, q$ respectively, to lie in $\operatorname{Im}(\Psi)$ there needs to be an $\alpha \in R_{s,0}^k$ such that $p \cdot \alpha = q$ and to construct the local inverse we need to prove that there exists a unique, algebraic way to determine $\alpha$.
From the equation $a_1^j \alpha_1 = b_1^j$ we obtain $\alpha_1 = \frac{b_1^j}{a_1^j}$, then from the equation $a_1^j \alpha_2 + a_2^j \alpha_1^2 = b_2^j$ we obtain $\alpha_2 = \frac{1}{a_1^j} \left( b_2^j - a_2^j \left( \frac{b_1^j}{a_1^j} \right)^2 \right)$ and so on until we determine $\alpha_{k-1}$ in terms of $p_j , q_j$.
Therefore, the local inverse of $\Psi$ is given by $$(p,q) \mapsto \left(\frac{b_1^j}{a_1^j} \, , \, \frac{1}{a_1^j} \left( b_2^j - a_2^j \left( \frac{b_1^j}{a_1^j} \right)^2 \right) \, , \, \dots \right)$$ and it is clearly algebraic.
\end{proof}

From this proposition, we can apply \cite[Proposition 0.9]{mum} and obtain the additional structure of a principal $R^k_{s,0}$-bundle for the geometric quotient $Q_0^k(X)$. 

\begin{crl}
    In the setting of Definition \ref{def-quot}, the quotient map
    $p_k: S_0^k(X) \to Q_0^k(X)$ is a principal $R^k_{s,0}$-bundle.
\end{crl}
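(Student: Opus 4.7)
The plan is to invoke \cite[Proposition 0.9]{mum} directly. That result asserts that if an algebraic group $G$ acts on a scheme $X$ such that a geometric quotient $X \to X/G$ exists and the action is scheme-theoretically free --- equivalently, such that the graph morphism $\Psi : X \times G \to X \times X$, $(x,g) \mapsto (x, x \cdot g)$, is a closed immersion --- then $X \to X/G$ carries the structure of a principal $G$-bundle.

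Both hypotheses are already in place in the present setting. The geometric quotient $Q_0^k(X) = S_0^k(X) / R_{s,0}^k$ exists by construction, based on the theory of non-reductive geometric quotients of jet schemes developed in \cite{berkir}, which is exactly what the definition of $Q_0^k(X)$ appeals to. The closed immersion property of $\Psi$ is precisely the content of the proposition just proved: on the affine chart $U_j = \{a_1^j \neq 0\} \subseteq S_0^k(X)$ the coefficients $\alpha_1, \ldots, \alpha_{k-1}$ of any $\alpha \in R_{s,0}^k$ satisfying $p \bullet \alpha = q$ were recovered uniquely and algebraically from the coordinates of $p$ and $q$ via an explicit recursion, which provides the local inverse of $\Psi$ onto its image and thus shows that $\Psi$ is a closed immersion.

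With these two inputs \cite[Proposition 0.9]{mum} applies and delivers the principal $R_{s,0}^k$-bundle structure on $p_k : S_0^k(X) \to Q_0^k(X)$. I do not expect any serious obstacle here: the substantive step, namely checking the scheme-theoretic freeness of a somewhat intricate non-reductive action, has already been carried out in the previous proposition, and the corollary is essentially a formal consequence of Mumford's criterion.
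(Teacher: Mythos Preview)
Your proposal is correct and matches the paper's approach exactly: the paper likewise simply invokes \cite[Proposition 0.9]{mum}, using the scheme-theoretic freeness established in the preceding proposition together with the existence of the geometric quotient from \cite{berkir}.
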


We now move on to studying the fibration structure. One of the classical tools to make principal bundles into Zariski locally trivial fibrations is exploiting the theory of special groups, developed in \cite{serre} to which we refer the reader for any detail.

Observing that both $\mathbb G_m$ and $U$ of Lemma \ref{prop-action} are special groups, so is their extension $R^k_{s,0}$ as developed in \cite[Section 4]{serre}. Lastly, being a special group the next result immediately follows.

\begin{crl}
    The quotient map
    $p_k: S_0^k(X) \to Q_0^k(X)$ is a Zariski locally trivial fibration.
\end{crl}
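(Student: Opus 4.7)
The proof is essentially an immediate consequence of the preceding observations and Serre's theory of special groups, so my plan is to simply assemble the relevant facts rather than introduce new geometry.

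First, I would recall the defining property of a special algebraic group $G$ in the sense of Serre, namely that every principal $G$-bundle over a variety is Zariski locally trivial (equivalently, has local sections for the Zariski topology). Next, I would point out that the previous corollary already identifies $p_k : S_0^k(X) \to Q_0^k(X)$ as a principal $R^k_{s,0}$-bundle, so the only thing left to verify is that $R^k_{s,0}$ is special, which is the content of the paragraph immediately preceding the statement.

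For that verification, I would invoke two standard facts from \cite{serre}: the multiplicative group $\mathbb{G}_m$ is special, and any unipotent group which, as a variety, is isomorphic to an affine space is special (since its principal bundles have classifying cohomology in $H^1$ with coefficients in a vector bundle, and Zariski-locally this vanishes). By Lemma \ref{prop-action} the subgroup $U \subseteq R^k_{s,0}$ cut out by $\alpha_1 = 1$ is a unipotent group isomorphic as a variety to $\mathbb{A}^{k-2}$, hence special, and we have a short exact sequence of algebraic groups
\[
1 \longrightarrow U \longrightarrow R^k_{s,0} \longrightarrow \mathbb{G}_m \longrightarrow 1
\]
exhibiting $R^k_{s,0}$ as an extension of two special groups. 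Since extensions of special groups are special by \cite[Section 4]{serre}, we conclude that $R^k_{s,0}$ itself is special.

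Combining these two inputs gives the claim: the principal $R^k_{s,0}$-bundle $p_k$ is Zariski locally trivial, i.e. a Zariski locally trivial fibration with fiber $R^k_{s,0}$. No step here looks delicate, since everything is either cited directly or already proved in the excerpt; the only point where one must be slightly careful is checking that the extension structure and the variety-theoretic isomorphism $R^k_{s,0} \cong \mathbb{G}_m \times \mathbb{A}^{k-2}$ discussed in the remark after Lemma \ref{prop-action} fit into a genuine exact sequence of algebraic groups, but this is transparent from the explicit matrix description.
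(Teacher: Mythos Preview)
Your proposal is correct and follows essentially the same route as the paper: both argue that $\mathbb{G}_m$ and the unipotent radical $U$ are special, that extensions of special groups are special (\cite[Section 4]{serre}), hence $R^k_{s,0}$ is special, and then apply this to the principal bundle from the previous corollary. You spell out the exact sequence and the reason $U$ is special in slightly more detail than the paper, but the argument is the same.
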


Finally, from Proposition \ref{fib} applied to our Zariski locally trivial fibration $p_k$ we find an explicit description of the motivic class $[Q_0^k(X)]$ in terms of $[S_0^k(X)]$. These classes are both important since the former is strictly related to motivic classes of curvilinear Hilbert schemes and the latter is more suitable to compute certain motivic integrals as we will see in Section \ref{relat}.

\begin{crl} \label{thm-quotient}
Let $(X,0)$ be a smooth variety. For any $k \geq 2$ we have $$[S_0^k(X)] =  [Q_0^k(X)] \cdot [R^k_{s,0}] = [Q_0^k(X)] \cdot (\Li -1) \Li^{k-2}.$$
\end{crl}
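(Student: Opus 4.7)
The plan is to combine the fibration structure just established with the multiplicativity of motivic classes over Zariski locally trivial fibrations. By the preceding corollary, the quotient map $p_k: S_0^k(X) \to Q_0^k(X)$ is a Zariski locally trivial fibration, and the fiber over any point is a torsor for the group $R^k_{s,0}$ acting freely on $S_0^k(X)$. Since the action is moreover free (as shown in the proposition preceding the special group argument), the fiber is isomorphic to $R^k_{s,0}$ itself.

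Next, I would invoke Proposition \ref{fib}, which says that for any Zariski locally trivial fibration $f: X \to Y$ with fiber $F$, one has $[X] = [F][Y]$ in the Grothendieck ring of varieties. Applying this directly to $p_k$ gives
\[
[S_0^k(X)] = [R^k_{s,0}] \cdot [Q_0^k(X)].
\]

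Finally, to obtain the explicit form of the right-hand side, I would recall the structural remark following Lemma \ref{prop-action}: the group $R^k_{s,0}$ is a $\mathbb{G}_m$-extension of its maximal unipotent radical $U$, and as a variety it is isomorphic to $\mathbb{G}_m \times \mathbb{A}^{k-2}$ (with the $\mathbb{G}_m$-factor parameterizing $\alpha_1 \neq 0$ and the affine factor parameterizing $\alpha_2, \dots, \alpha_{k-1}$). Hence $[R^k_{s,0}] = (\Li-1)\Li^{k-2}$, and substituting yields the desired formula.

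There is no real obstacle here: the statement is essentially a bookkeeping consequence of what has been established, namely the free action, the principal bundle structure, the fact that $R^k_{s,0}$ is special (so the bundle is Zariski locally trivial), and the explicit identification of the underlying variety of $R^k_{s,0}$. The only thing worth making sure of is that the isomorphism $R^k_{s,0} \cong \mathbb{G}_m \times \mathbb{A}^{k-2}$ is only being used at the level of varieties — the non-trivial semidirect product structure does not affect the motivic class computation since $[\,\cdot\,]$ depends only on the underlying variety.
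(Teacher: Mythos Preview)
Your proposal is correct and matches the paper's approach exactly: the paper states this corollary as an immediate consequence of applying Proposition~\ref{fib} to the Zariski locally trivial fibration $p_k$, together with the earlier remark that $[R^k_{s,0}] = (\Li-1)\Li^{k-2}$. There is nothing more to add.
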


\subsection{The relationship between the jet scheme and Hilbert schemes} \label{relat} \label{32}

The goal of this section is to investigate the connection between jet schemes as defined in Definition \ref{def-the jet scheme} and Hilbert schemes as defined in Definition \ref{def-hilb} in the curvilinear case, as there seems to be a natural relationship with truncated power series from their definition.

This comparison starts by constructing a geometric bijection between $Q_0^k(X)$ and $\CHilb^k_0(X)$, to then apply Proposition \ref{geom}. This means we have to construct a morphism that is a bijection at the level of $\C$-points of these varieties.

\begin{thm} \label{thm-iso}
Let $(X,0)$ be a smooth variety over $\C$ and let $$\Phi: Q_0^k(X) \to \CHilb^k_0(X), \, \Phi(\phi) = \operatorname{Im}(\phi) $$ where $\operatorname{Im}$ denotes the scheme-theoretic image. Then, it is a geometric morphism.
\end{thm}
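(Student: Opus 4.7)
The plan is to split the statement into three verifications: (i) the assignment $\phi\mapsto \operatorname{Im}(\phi)$ lands in $\CHilb^k_0(X)$ and is constant on $R^k_{s,0}$-orbits, so it descends to a set-theoretic map $\Phi: Q_0^k(X) \to \CHilb^k_0(X)$; (ii) this map is algebraic, i.e.\ a morphism of varieties; (iii) the induced map on $\C$-points is bijective, which by Proposition \ref{geom} is what is meant by geometric morphism.

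For (i), fix $\phi \in S_0^k(X)$. Since $\phi$ is a closed embedding of $\operatorname{Spec}(\C[t]/(t^k))$ sending the closed point to $0$, the scheme-theoretic image $Z = \operatorname{Im}(\phi)$ is a zero-dimensional subscheme supported at $0$ with $\mathcal O_Z \cong \C[t]/(t^k)$, hence $Z \in \CHilb^k_0(X)$ by Definition \ref{def-chilb}. Invariance under the action $\bullet$ is immediate: for $\alpha \in R^k_{s,0}$, the morphism $\operatorname{Spec}(\alpha)$ is an automorphism of the source, so $\phi$ and $\phi \circ \operatorname{Spec}(\alpha)$ have identical scheme-theoretic images.

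For (ii), I would invoke the universal property of the Hilbert scheme from Definition \ref{def-fun}. Given a base $S$ and a family $\phi_S$ of elements of $S_0^k(X)$ parametrized by $S$, the corresponding closed embedding $\operatorname{Spec}(\C[t]/(t^k)) \times S \hookrightarrow X \times S$ has image a closed subscheme of $X \times S$ that is finite, flat and surjective of degree $k$ over $S$; flatness is automatic because $\C[t]/(t^k)$ is a free $\C$-module of rank $k$. This universal family defines a morphism $S \to \Hilb^k_0(X)$, factoring through $\CHilb^k_0(X)$ by (i). Since the construction is $R^k_{s,0}$-invariant, it descends through the geometric quotient $p_k$ to a morphism $Q_0^k(X) \to \CHilb^k_0(X)$.

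For (iii), surjectivity on $\C$-points follows directly from Definition \ref{def-chilb}: any $Z \in \CHilb^k_0(X)(\C)$ admits an isomorphism $\mathcal O_Z \cong \C[t]/(t^k)$, which dualizes to a closed embedding $\phi:\operatorname{Spec}(\C[t]/(t^k)) \hookrightarrow X$ with image $Z$, and automatically $\phi \in S_0^k(X)$. For injectivity, if $\phi,\phi' \in S_0^k(X)$ share image $Z$, both factor through isomorphisms $\operatorname{Spec}(\C[t]/(t^k)) \xrightarrow{\sim} Z$ since source and image have the same length $k$. Composing, $(\phi')^{-1} \circ \phi$ is an automorphism of $\operatorname{Spec}(\C[t]/(t^k))$ fixing the closed point, corresponding dually to a $\C$-algebra automorphism $t \mapsto \alpha$ of $\C[t]/(t^k)$ with $\alpha \in (t)\setminus(t^2)$, i.e.\ precisely an element of $R^k_{s,0}$. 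Thus $\phi = \phi' \bullet \alpha$, proving injectivity on $Q_0^k(X)(\C)$. The main obstacle I anticipate is step (ii): one must verify that the conditions cutting out $S_0^k(X)$ inside $X_k$ (closed embedding, punctual, smooth jet) are open and behave well in families, so that the image construction produces an honest morphism and not merely a set-theoretic map — the universal property of $\Hilb^k_0(X)$ is the natural tool, but needs the flatness of the family over $S$ to apply.
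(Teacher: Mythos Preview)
Your proposal is correct and follows essentially the same approach as the paper: well-definedness and $R^k_{s,0}$-invariance, construction of the morphism via the universal property of the Hilbert functor applied to a flat family over $S_0^k(X)$, and bijectivity on $\C$-points. Your bijectivity argument, identifying $\operatorname{Aut}_{\C\text{-alg}}(\C[t]/(t^k))$ with $R^k_{s,0}$, is slightly more direct than the paper's route through the smooth curve $C_I$ and uniqueness of colength-$k$ ideals in $\C[[t]]$ up to units, but the content is the same.
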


\begin{proof}
Working locally we equivalently work with the description of $Q_0^k(X)$ presented in Remark \ref{loc}, the map $\Phi$ is defined as $\phi \mapsto \operatorname{ker}(\phi)$. We immediately observe that $\Phi$ is a well-defined map since for any given $\phi \in S_0^k(X)$, being surjective onto $\C[t] / (t^k)$ it follows that the quotient by $\operatorname{ker}(\phi)$ is of finite colength and curvilinear.

Then, we have to prove that $\Phi$ is a morphism. 
It is not straightforward to deduce it directly from its definition, so we will use the representability of the more general Hilbert functor of Definition \ref{def-fun}, constructing a morphism that will correspond to the same map on the $\C$-points of $Q_0^k(X)$ and $\CHilb^k_0(X)$. 
We denote by $\mathcal{Z}$ the universal subscheme, corresponding under representability to $id_X$ and consider the subvariety 
\begin{center}
    $Z_{J} \subseteq X \times S_0^k(X)$, $Z_{J }
 = \{ ( supp(\phi) \cap m^k , \phi ) \}$.
\end{center}
 The projection is clearly finite, flat and surjective of degree $n$ - having as fibers the $n$ points of the support of the corresponding parameterized subscheme of colength $n$ at $m$.
We then consider by representability the morphism $\Phi$ such that $Z_{J }= (H(\Phi))(\mathcal{Z}) =  \mathcal{Z} \times_{\CHilb^k_0(X)} S_0^k(X)$.
\begin{center}
\begin{tikzcd}
 & & \mathcal{Z} \arrow[d, hook]\\
Z_J \arrow[r, hook] \arrow[urr, bend left] \arrow[rd] & {X \times S_0^k(X)} \arrow[d, "p"] \arrow[r, "id_X \times \widetilde{\Phi} "] & X \times \CHilb^k_0(X) \arrow[d, "p_2"] \\
                               & {S_0^k(X)} \arrow[r, "\widetilde{\Phi}"]                                     & \CHilb^k_0(X)                          
\end{tikzcd}
\end{center}
 By definition $Z_J$, $\widetilde{\Phi}$ is then defined as associating to a jet the corresponding scheme as a point of the curvilinear Hilbert scheme $\CHilb^k_0(X)$, corresponding to $\Phi$.
 Such morphism factorizes through $R^k_{s,0}$ as it is  $R^k_{s,0}$-invariant, and indeed on $\C$-points it corresponds to the explicit $\Phi$ considered above by construction.

To conclude, we prove that $\Phi$ is a set-theoretical bijection on $\C$-points.
An immediate way to prove it consists in observing that given $I$ curvilinear ideal and $C_I$ the corresponding smooth curve, we have $$\mathcal{O}_{C_I } \cong \C[t].$$ In particular, $I$ is an ideal of $\mathcal{O}_{C_I}$ thus it needs to correspond to another ideal of colength $k$ under the isomorphism: there exists one and only one such ideal up to multiplication by unit, hence one and only one morphism in $S_0^k(X)$ having it as its kernel.
\end{proof}

Finally, applying Proposition \ref{geom} we achieve the equality of motivic classes of curvilinear Hilbert schemes and smooth the jet scheme.
\begin{crl} \label{thm-iso2} 
Let $(X,0)$ be a smooth variety over $\C$. Then,
$$[Q_0^k(X)] = [\CHilb^k_0 (X)].$$  
\end{crl}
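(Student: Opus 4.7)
The statement is essentially a direct corollary of Theorem \ref{thm-iso} combined with Proposition \ref{geom}, so my plan is to unpack this one-step reduction and make sure the hypotheses of Proposition \ref{geom} are genuinely met.

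First, I would recall the content of Theorem \ref{thm-iso}: the assignment $\Phi : Q_0^k(X) \to \CHilb_0^k(X)$ sending a class of surjective $k$-jets to the scheme-theoretic image (equivalently, to the kernel of the corresponding surjection $\mathcal{O}_{X,0} \twoheadrightarrow \C[t]/(t^k)$) is a \emph{geometric morphism}, i.e.\ a morphism of varieties that induces a bijection on $\C$-points. In the notation of the paper, $\widetilde{\Phi}$ is a genuine morphism constructed via the Hilbert functor representability and descends through the principal $R^k_{s,0}$-bundle $p_k : S_0^k(X) \to Q_0^k(X)$ to a morphism $\Phi$ on the geometric quotient.

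Second, I would invoke Proposition \ref{geom}: for any morphism of varieties inducing a bijection on $\C$-points, the motivic classes of the source and target coincide in $\mathbf{K_0}(\operatorname{Var}_\C)$. Applying this to $\Phi$ immediately gives
\[
[Q_0^k(X)] \;=\; [\CHilb_0^k(X)],
\]
which is the claim.

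The only potential subtlety is ensuring that $\Phi$ really lands in the smooth variety setting where Proposition \ref{geom} applies; but $\CHilb_0^k(X)$ is an open subscheme of the Hilbert scheme (hence a variety) and $Q_0^k(X)$ is a geometric quotient of an open subscheme of $X_k$ by the special group $R^k_{s,0}$, so both are honest varieties over $\C$. No extra work should be needed here, so I do not expect any real obstacle — the corollary is simply the motivic shadow of Theorem \ref{thm-iso}.
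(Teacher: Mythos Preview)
Your proposal is correct and follows exactly the paper's approach: the corollary is obtained by applying Proposition \ref{geom} to the geometric bijection $\Phi$ supplied by Theorem \ref{thm-iso}. Your extra remark checking that both $Q_0^k(X)$ and $\CHilb_0^k(X)$ are honest varieties is a reasonable sanity check but not something the paper spells out.
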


It is needed to note that the result of Theorem \ref{thm-iso} thus Corollary \ref{thm-iso2} can also be deduced by the isomorphism presented in \cite[Theorem 3.8, Remark 3.9]{Berczi}, where an algebraic embedding of the quotient $Q_0^k(X)$ into a certain Grassmanian having as image the curvilinear Hilbert scheme $\CHilb^k_0(X)$ is constructed. One just needs to translate such morphism in terms of our notion of jet scheme $$(\mathbb A^n)_k = (\mathbb A^n)_k (\C) = \operatorname{Hom}_{\operatorname{Sch}_\C}(\C[x_1, \dots , x_n], \C[t] / (t^k) )$$  as in Definition \ref{def-the jet scheme} replacing the definition $$ J_k(l,m)(\mathbb A^n) = \operatorname{Hom}_{hol}(\C^l, \C^m) \text{ defined up to its $k$-th derivative}$$ considered in \cite[Subsction 3.1]{Berczi}.
We can immediately observe that given $\phi \in J_k(1,n)(\mathbb A^n) $ by definition it corresponds to a morphism $\phi: \C \rightarrow \C^n$ determined up to its $k$-th derivative, which in turn equals the truncated description $\phi = (p_1(t) , \dots , p_n(t))$ where $ p_i(t)\in \C[t] /(t^k)$.
Hence, $\phi$ can be seen as the morphism of $(\mathbb A^n)_k$ determined by $\phi (x_i)=p_i(t)$. Then, the smoothness and punctual conditions for the jet scheme also translate accordingly.

\subsection{Curvilinear motivic series and curvilinear Igusa zeta function}

Given the relationship between motivic classes of curvilinear Hilbert schemes and of quotient jet schemes proved in Corollary \ref{thm-iso2} and thinking of the definition of motivic integrals as a series of motivic classes of constructible subsets truncated in jet schemes, it is natural for us to introduce the next definition.

\begin{df} \label{def-ser}
Let $(X,0)$ be a smooth variety. We denote by $H_k^X = [\CHilb^k_0(X)]$ the motivic classes of the $k$-th curvilinear Hilbert scheme. We then consider the motivic generating series
$$ Q_X(T) = \sum_{k = 2}^\infty H_k^X \, T^k .$$
\end{df}

\begin{rmk} 
From the definition of the motivic measure and motivic integrals, it would be natural to consider series in motivic classes of (subsets of) jet schemes. In our case, it would be for motivic classes $h_k^X = [S_0^k(X)]$ of smooth, punctual elements of the $k$-jet scheme for every $k \geq 2$. 
We observe that from Theorem \ref{thm-quotient} we obtain the chain of relationships $$ \frac{h_k^X}{(\Li -1)\Li^{k-2}} = H_k^X \iff h_k^X = H_k^X \, \Li^{k-2} ( \Li - 1).$$

What is important to mention is that in Sections \ref{31} and \ref{32} we can also consider more in general an ambient smooth variety $(W,0)$ and a subvariety $(V,0)$ and the relationship 
$$h^V_k = H^V_k \Li^{k-2} (\Li - 1)$$
still holds since $S^k_0(V) = V_k \cap S^k_0(W)$ and $\CHilb^k_0(V)$ corresponds to $Q^k_0(V)$ restricting $\Phi$ to $V_k$.
\end{rmk}

Finally, we have everything needed to present the relationship between Igusa zeta functions and the generating series of motivic classes of curvilinear Hilbert schemes. 

\begin{thm} \label{thm-igusa-Q}
Let $(W,0)$ be a smooth variety of dimension $n$ and let $V \subseteq W$ be a closed subvariety with $0\in V$. Let $A = \pi_1^{-1}(0) \setminus \pi_2^{-1}(0) \subseteq W_\infty$.
Then, we have $$\int_A (\Li^{-ord_V})^s d\mu = 
\frac{1}{\Li^s}\frac{1}{\Li^n}\left(\frac{1}{\Li^n} - \frac{1}{\Li^{2n}}\right)   +   \frac{\Li -1}{\Li^2} (1 - \Li^s) Q_V( \Li^{-s - (n-1)} ).$$ 
\end{thm}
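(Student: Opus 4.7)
The strategy is to unfold the Igusa zeta function as a series over the level sets of $ord_V$, recognize each such level set (after truncation) as a cylinder on the smooth-punctual jet space $S_0^k(V)$, and then substitute the identity $[S_0^k(V)] = (\Li-1)\Li^{k-2}\,[\CHilb^k_0(V)]$ supplied by Corollary \ref{thm-quotient} together with Corollary \ref{thm-iso2} (extended to closed subvarieties by the remark following Definition \ref{def-ser}), so that the generating series $Q_V$ emerges.

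Setting $T = \Li^{-s}$ and $a_k := \mu\bigl(A \cap \{ord_V \geq k\}\bigr)$, Definition \ref{def-igusa} rewrites the integral as the telescoping series $\sum_{k \geq 0}(a_k - a_{k+1})\,T^k$. The key geometric step is the identification, for $k \geq 2$, of $A \cap \pi_k^{-1}(V_k)$ with $\pi_k^{-1}(S_0^k(V))$: the conditions $\pi_1(\gamma) = 0$ and $\pi_2(\gamma) \neq 0$ defining $A$ are determined at level $2$, while $ord_V(\gamma) \geq k$ is equivalent to $\pi_k(\gamma) \in V_k$, so the image $\pi_k(A_k)$ consists precisely of the smooth, punctual $k$-jets landing in $V$, namely $S_0^k(V)$. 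Smoothness of $W$ then supplies $a_k = [S_0^k(V)]/\Li^{nk}$ via the simplified formula following Definition \ref{def-measure}. For $k \in \{0,1\}$ the inclusion $0 \in V$ forces $ord_V(\gamma) \geq 1$ for every $\gamma \in A$, hence $a_0 = a_1 = \mu(A)$, and a direct level-$2$ computation in local coordinates identifies $\pi_2(A)$ with $\mathbb{A}^n \setminus \{0\}$, giving $\mu(A) = (\Li^n - 1)/\Li^{2n}$.

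An Abel-type rearrangement using $a_0 = a_1$ then collapses the two boundary terms into a single summand and yields
\[
\int_A T^{\,ord_V}\,d\mu \;=\; \mu(A)\,T \;+\; (1 - T^{-1})\sum_{k \geq 2} a_k\, T^k.
\]
Substituting $[S_0^k(V)] = (\Li - 1)\Li^{k-2}[\CHilb^k_0(V)]$ and collecting exponents as $\Li^{k-nk}T^k = (\Li^{-(n-1)}T)^k$ transforms the tail into $\frac{\Li-1}{\Li^2}\,Q_V(T\,\Li^{-(n-1)})$. Multiplying by $(1 - T^{-1}) = (1 - \Li^s)$ recovers the second summand $\frac{\Li-1}{\Li^2}(1 - \Li^s)\, Q_V(\Li^{-s-(n-1)})$ of the stated formula, while the boundary term $\mu(A)\,T$, after substituting the value of $\mu(A)$ and $T = \Li^{-s}$, produces the first summand.

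The main obstacle is the cylinder identification $A \cap \{ord_V \geq k\} = \pi_k^{-1}(S_0^k(V))$ for $k \geq 2$: one inclusion is immediate, but the reverse requires checking that every smooth punctual $k$-jet into $V$ extends to a genuine arc in $A$ with $V$-order at least $k$, and that the open condition $\pi_2 \neq 0$ defining $A$ is compatible with the closed condition $\pi_k \in V_k$. Once this geometric identification and the bookkeeping of the $k = 0, 1$ boundary contributions are in hand, the proof reduces to routine Abel summation and factoring using the jet-to-Hilbert dictionary established in Section \ref{31} and Section \ref{32}.
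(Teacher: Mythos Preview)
Your approach is essentially the paper's: stratify $A$ by the level sets $\{ord_V \geq k\}$, identify $A \cap \{ord_V \geq k\}$ with the cylinder $\pi_k^{-1}(S_0^k(V))$ for $k \geq 2$, substitute $[S_0^k(V)] = (\Li-1)\Li^{k-2}[\CHilb^k_0(V)]$, and telescope; the paper writes the partial sums $\mu(A_m)-\mu(A_{m+1})$ explicitly where you invoke Abel summation, but the manipulation is identical.

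The obstacle you flag is not one: since $A$ is already a level-$2$ cylinder over the \emph{smooth} variety $W$, surjectivity of $\pi_k:W_\infty\to W_k$ gives $\pi_k(A)=(\pi_2^k)^{-1}(\pi_2(A))=S_0^k(W)$ for $k\geq 2$, whence $\pi_k^{-1}(V_k)\cap A=\pi_k^{-1}(V_k\cap S_0^k(W))=\pi_k^{-1}(S_0^k(V))$ directly---you never need to lift a jet of $V$ to an arc of $V$, only to an arc of $W$. Your value $\mu(A)=(\Li^n-1)/\Li^{2n}=\Li^{-n}-\Li^{-2n}$ is correct and is what the paper's own computation uses; the extra factor $\Li^{-n}$ in the displayed first summand is a typo in the statement (Example~\ref{ex-line} checks out with your value, not the printed one), so your boundary term $\mu(A)\,T$ is the right first summand.
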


\begin{proof}
We use the notation of Definition \ref{ord}. Consider the following sets in $A$ defined as
$$ A_m = \{ x \in A \, | \, ord_V(x) \leq m \} $$ for $m \geq 1$.
We want to use these sets to build a partition of $A$ that will allow us to compute the integral as a series in motivic classes while being explicitly measurable in the sense of Definition \ref{def-measure}. 
We observe that 
\begin{center}
    $ord_V(x) \leq m $ if and only if $ \pi_m(f_i(x)) =0$ for all $f_i$
\end{center}
that is $f_i(x) = 0$ in $\C[t] / (t^m)$. 
Therefore $$A_m = \{ x \in A \, | \, \pi_m(f_i(x)) = 0 \}  
= \pi_m^{-1} (x \in (\C [t] / (t^m))^n \, | \, f_i(x) = 0 ) \cap A = \pi_m^{-1}(V_m ) \cap A$$ 
thinking of $V_m$ as a subset of $W_m$. Then, when $m > 1$ the key observation is that $\pi_m^{-1}(V_m ) \cap A = \pi_m^{-1}(V_m \cap \pi_m(A))$. 
Thus, denoting by $C_m = V_m \cap \pi_m(A) \subseteq V_m \subseteq W_m$ and observing that it is clearly constructible, we have 
\begin{center}
    $A_m = \pi_m^{-1} (C_m)$ hence $\mu(A_m) = \displaystyle\frac{[C_m]}{\Li^{nm}}.$
\end{center} 
Moreover, we notice that by definition $S^m_0(V) = V_m \cap \pi_m(A)$ implying $[C_m] = h^V_m = H^V_m (\Li - 1) \Li^{m-2}$ and $$\mu(A_m) = \frac{H^V_m (\Li -1)\Li^{m-2}}{\Li^{nm}}.$$ 
When $m = 1$ we have instead $\pi_1^{-1}(Z_m) \cap A = A$.
Thus 
\begin{center}
    $V_1 = \pi_1^{-1} (A)$ hence $\mu(V_1) = \displaystyle\frac{[A]}{\Li^n}= \displaystyle\frac{1}{\Li^n}\left(\frac{1}{\Li^n} - \displaystyle\frac{1}{\Li^{2n}}\right)$.
\end{center}   
Using this to partition the domain of integration and thus the integral, we obtain:
$$\int_A (\Li^{-ord_V})^s d\mu = \frac{1}{\Li^s}\mu(A_1 \setminus A_2) + \frac{1}{\Li^{2s}}\mu(A_2 \setminus A_3) + \frac{1}{\Li^{3s}}\mu(A_3 \setminus A_4) \dots =$$
$$\frac{1}{\Li^s}(\mu(A_1) - \mu(A_2)) + \frac{1}{\Li^{2s}}(\mu(A_2) - \mu(A_3)) + \frac{1}{\Li^{3s}}(\mu(A_3) - \mu(A_4)) \dots =$$
$$\frac{1}{\Li^s}\frac{1}{\Li^n}\left(\frac{1}{\Li^n} - \frac{1}{\Li^{2n}}\right)   +   \sum_{m=2}^\infty \mu(A_m)\left(\frac{1}{\Li^{ms}}  -  \frac{1}{\Li^{(m-1)s}}\right) =$$
$$\frac{1}{\Li^s}\frac{1}{\Li^n}\left(\frac{1}{\Li^n} - \frac{1}{\Li^{2n}}\right)   +   \frac{\Li -1}{\Li^2}\sum_{m=2}^\infty  \frac{H^V_m}{\Li^{m(n-1)}} \left(\frac{1}{\Li^{ms}}  -  \frac{1}{\Li^{(m-1)s}}\right) =$$
$$\frac{1}{\Li^s}\frac{1}{\Li^n}\left(\frac{1}{\Li^n} - \frac{1}{\Li^{2n}}\right)   +   \frac{\Li -1}{\Li^2} (1 - \Li^s) Q_V( \Li^{-s - (n-1)}).$$
\end{proof}

To conclude this section, we check the relationship we just proved between $\displaystyle\int_A (\Li^{-ord_V})^s d\mu$ and $Q_V(T)$ on an easy example.

\begin{ex} \label{ex-line}
In $\mathbb A^2$ we consider the line $(C,0)$ given by $f(x,y)=x$. 
Recurring to usual integration techniques (see \cite{report} for example) we start by computing the left-hand side of Theorem \ref{thm-igusa-Q} and we obtain $$\displaystyle\int_A (\Li^{-ord_C})^s d\mu = \frac{\Li-1}{\Li^2 \Li^{s+1}} \frac{\Li^{s+2}-1}{\Li(\Li^{s+1}-1)}.$$
We then compute the right-hand side. We observe that being smooth $\Hilb^n_0 (C) = \CHilb^n_0 (C)$ and the only possibility for $I$ of colength $n$ is $I=(x,y^n)$. 
The generating series after changing variable as in Theorem \ref{thm-igusa-Q} is then $$Q_C( \Li^{-s - (2-1)}) = Q_C( \Li^{-(s + 1)}) = \displaystyle\sum_{n \geq 2} 1 \left(\frac{1}{\Li^{s+1}} \right)^n = \frac{1}{\Li^{s+1}(\Li^{s+1}-1)}$$
and one can check that the relationship of the theorem holds.
\end{ex}
\begin{ex}
In general, the same computations hold for the class of smooth curves curve defined by $f(x,y)=x-y^k$ (and any other smooth curve) since their local rings are isomorphic to $\C[[y]]$. In particular, all these curves have the same series $Q_C(T)$ with no dependence on $k$ and this is justified in Section \ref{ex}. Indeed, their corresponding link is a $(1,k)$-torus knot, which is always the unknot hence having the same topology.
\end{ex}

\subsection{Denef's formula for the curvilinear Igusa zeta function} \label{denef}

As explained in Section \ref{back}, motivic integrals can be re-expressed in terms of an embedded resolution after changing variables with Theorem \ref{thm-change} and the resolution map, and the result of integration will then depend on motivic classes of divisors appearing from the resolution process since they provide a transverse, locally monomial model of the singularity that is used to compute the integral after the change.

The existing formulations available in the literature are due to Denef \cite[Theorem 3.4]{report} and all concern the so-called residual support functions, that roughly correspond to indicator functions of domains uniquely determined by their reduction modulo $t$. 
This implies that whenever one is interested in the ball $(t^{m+1})$ or the contact locus $(t^m) \setminus (t^{m+1})$ these theorems do not apply.

The curvilinear Igusa zeta function of Theorem \ref{thm-igusa-Q} falls within this latter category, hence forcing us to proceed differently. Specifically, we need to understand what happens to the domain of integration $A = \pi_1^{-1}(0) \setminus \pi_2^{-1}(0)$ under resolution maps, that is how the resolution affects the function $ord_0: W_\infty \to \mathbb N$ on the arc scheme $Y_\infty$ and in particular arcs belonging to $ord_0^{-1}(1)$.
Going back to the data introduced in Definition \ref{ddef-data}, only discrepancies describe the complexity of the resolution independently from the considered singularity but they are not suited to control the behavior of the valuation of arcs along the resolution.
To have that control we can additionally consider the order multiplicity, introduced for example in \cite{Fantini}, that corresponds to the multiplicities of Definition \ref{mult} but for the pair $(W,0)$.


\begin{df} \label{def-order}
Let $\pi: (Y, E) \rightarrow (W,V)$ be an embedded resolution of the pair $(W,V)$ as in Definition \ref{res} that is also a resolution of the pair $(W,0)$ and let $\mathcal{I}$ be the non-zero coherent sheaf of ideals determining $0$.
For every $i \in I$ we denote by $m_i$ the multiplicity of $\mathcal{I} \, \mathcal{O}_Y$ along any smooth point of $E_i$. This is called order of $E_i$.
\end{df}
\begin{df}
We remark here that we do not consider the strict transform as part of the list of divisors, and when wanting to do so one then has $N_{strict} = 1$ and $\nu_{strict} = m_{strict} = 0$. 
\end{df}

\subsubsection{Embedded resolution and the arc-valuation along it} \label{val}

The following observation is needed to start: since the change of variables depends only on the complexity of the resolution map $\pi$ rather than the pair $(W,V)$, it is reasonable to expect a characterization of the domain $\pi^{-1}(A)$ purely in terms of the resolution structure. 

We start by presenting a first result regarding the behavior of the order function $ord_0$ under the morphism of arc schemes $\pi_\infty : W_\infty \to Y_\infty$ induced by an embedded resolution $\pi: (Y, E) \rightarrow (W,V)$ of the pair $(W,V)$ as in Definition \ref{def-order}. 

\begin{prp} \label{thm-val}
Let $\pi: (Y, E) \rightarrow (W,V)$ be an embedded resolution of the pair $(W,V)$ and let $\gamma \in W_\infty$ with $ord_0(\gamma) = k \geq 1$.
Then, $\pi_\infty$ preserves the valuation of arcs, meaning that $$ord_0(\pi_\infty(\gamma)) = k$$
where $\pi_\infty(\gamma) = \pi^*(\gamma) \in Y_\infty$.
\end{prp}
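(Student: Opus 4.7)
The plan is to prove the equality of valuations by a direct functoriality argument, reducing everything to the defining relation $\pi \circ \pi_\infty(\gamma) = \gamma$. First I would verify that $\pi_\infty(\gamma) \in Y_\infty$ is well-defined under the hypothesis $ord_0(\gamma) = k \geq 1$: since $k$ is a finite positive integer, the arc $\gamma$ is not the constant arc at $0$, so its restriction to the generic point $\operatorname{Spec}(\C((t)))$ factors through $W \setminus \{0\}$, which is contained in the open locus where $\pi$ is an isomorphism (using that $\pi$ is also a resolution of $(W,0)$). The valuative criterion of properness applied to the proper birational $\pi : Y \to W$ then produces a unique lift $\tilde\gamma : \operatorname{Spec}(\C[[t]]) \to Y$ satisfying $\pi \circ \tilde\gamma = \gamma$, and I would set $\pi_\infty(\gamma) := \tilde\gamma$.

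Next, I would interpret $ord_0$ on $Y_\infty$ as the order function associated to the pullback ideal sheaf $\mathcal{I}\mathcal{O}_Y$, where $\mathcal{I} \subset \mathcal{O}_W$ denotes the ideal sheaf of $\{0\}$ (in line with Definition \ref{def-order}). Smoothness of $W$ at $0$ provides local generators $f_1, \dots, f_n$ of $\mathcal{I}$, and correspondingly $\mathcal{I}\mathcal{O}_Y$ is locally generated at any point of $\pi^{-1}(0)$ by the pullbacks $\pi^* f_1, \dots, \pi^* f_n$. By the naturality of pullback of functions along arcs,
$$\tilde\gamma^{\,*}(\pi^* f_i) \;=\; (\pi \circ \tilde\gamma)^{*}(f_i) \;=\; \gamma^{*}(f_i)$$
in $\C[[t]]$ for every $i = 1, \dots, n$, so the $t$-adic valuations on the two sides coincide.

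Taking the minimum over $i$ and using that the order of an arc along a finitely generated ideal sheaf is computed as the minimum $t$-valuation over any set of generators (the minimum is attained on generators, since $val_t\bigl(\sum h_i \pi^* f_i\bigr) \geq \min_i val_t\bigl(\tilde\gamma^*(\pi^* f_i)\bigr)$ for arbitrary $h_i$), one obtains
$$ord_0(\tilde\gamma) \;=\; \min_i val_t\bigl(\tilde\gamma^{\,*}(\pi^* f_i)\bigr) \;=\; \min_i val_t\bigl(\gamma^{*}(f_i)\bigr) \;=\; ord_0(\gamma) \;=\; k,$$
which is the desired equality.

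The main obstacle I anticipate is not the computation itself, which is essentially formal, but rather the clean justification that the lift $\pi_\infty(\gamma) = \tilde\gamma$ is well-defined as an element of $Y_\infty$ under the sole hypothesis $ord_0(\gamma) \geq 1$ with $k < \infty$; once this is secured via the valuative criterion applied to the proper birational $\pi$, the preservation of the valuation is a direct consequence of the chain rule for pullbacks and the fact that pullbacks of local generators of $\mathcal{I}$ locally generate $\mathcal{I}\mathcal{O}_Y$.
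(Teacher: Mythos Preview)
Your proposal is correct and follows essentially the same approach as the paper: both arguments reduce to the functoriality identity $\gamma^{*}(\mathcal{I}) = \tilde\gamma^{\,*}\pi^{*}(\mathcal{I})$ for the ideal sheaf $\mathcal{I}$ of $0$, which immediately gives equality of $t$-adic orders. The only cosmetic difference is that the paper cites the standard bijection $\pi_\infty$ on arc spaces directly (\cite[Proposition 4.4.2]{greenbook}) rather than re-deriving the lift via the valuative criterion, and works with the pulled-back ideal sheaf globally rather than through a chosen set of generators.
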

\begin{proof}
We recall that $\pi_\infty: W_\infty \to Y_\infty$ being a bijection is a known fact proved already in \cite[Proposition 4.4.2]{greenbook} and we are left with studying the behavior of $ord_0: W_\infty \to \mathbb N$.
To understand the order in $0$, we recall its definition. Given $\gamma : \operatorname{Spec}(\C [[t]]) \to W$ and given $\mathcal{I}$ the ideal sheaf of $0 \in W$, we have that $\gamma^*(\mathcal{I})$ is an ideal sheaf on $ \operatorname{Spec}(\C [[t]])$. It corresponds to some ideal $$I_\gamma = (t^k)$$ for some $k = ord_0(\gamma)$.
Then, by the bijection $\pi_\infty$ we have a unique lift $\gamma' = \pi^*(\gamma) = \pi_\infty(\gamma)$ as an arc of $Y$ with $\gamma'(\operatorname{Spec}(\C)) = p \in E_J^o$ for some $J \subseteq I$. In particular,  by definition of lift $$\gamma^*(\mathcal{I}) = (\gamma')^* \pi^*(\mathcal{I})$$ and it follows that $k = ord_0(\gamma) = ord_\mathcal{I}(\gamma')$ as the order at $0$ of $\gamma'$ can be retrieved from the stalk $\mathcal{I}_{\gamma'|_{t=0}}$.
\end{proof}

Nevertheless, the previous proposition is not enough as it does not provide any information on how $ord_0(\gamma)$ is specifically related to $ord_{E_j}(\gamma')$ for $j \in J$ where $J$ denotes the subset of divisors where $\gamma'(\operatorname{Spec}(\C)) \in E_J^o$ lies, and this relationship is needed to ultimately understand the order of arcs in terms of the resolution structure. 
From the definition of the order of divisors, we directly have that $(\gamma')^* \pi^*(\mathcal{I})$ corresponds to the ideal $(t^{K})$ where $K = \displaystyle\sum_{j \in J} m_j ord_{E_j}(\gamma')$ thus
\begin{equation}\label{croo}
    ord_0(\pi(\gamma')) = \displaystyle\sum_{j \in J} m_j ord_{E_j}(\gamma').
\end{equation}


Combining Proposition \ref{thm-val} and Equation (\ref{croo}) we deduce a statement that we can think of as a result similar to \cite[Theorem 3.4]{report}, as it allows us to explicitly control certain domains of integration under change of variables. 
Knowing that $\pi_\infty$ preserves the valuation and knowing how the order of divisors contributes to the valuation, we can say which arcs of $Y_\infty$ have valuation $k$ easily in terms of divisors but also correspond to arcs of $W_\infty$ of valuation $k$ under $\pi$.

\begin{crl} \label{crl-den}
Let $\pi: (Y, E) \rightarrow (W,V)$ be an embedded resolution of the pair $(W,V)$ as in Definition \ref{def-order} and let $0 \in V$. Let $(E_i)_{i \in I}$ be the components of the simple normal crossing divisor including the strict transform $\widetilde{V}$. Then, denoting by $A_k$ the set of arcs $\gamma \in W_\infty$ with $ord_0(\gamma) = k$, we have that 
$$\pi^{-1}(A_k) = \displaystyle\coprod_{J \subseteq I} U_J^k$$ 
where
$$U_J^k = \displaystyle\coprod_{J \in I } \{ \gamma' \in Y_\infty \, | \,    
\displaystyle\sum_{j \in J} m_j \, ord_{E_j}(\gamma') = k \text{ and } \gamma'(\operatorname{Spec}(\C)) \in E_J^o\}.$$
\end{crl}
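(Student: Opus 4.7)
The plan is to exploit the stratification of $Y$ induced by the simple normal crossing divisor $E \cup \widetilde{V}$, and then read off the order condition on arcs via the two ingredients already established: the fact that $\pi_\infty$ preserves the order at $0$ (Proposition \ref{thm-val}) and the local formula expressing $ord_0$ on $W_\infty$ in terms of the orders $ord_{E_j}$ on $Y_\infty$ (Equation (\ref{croo})).

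First, I would write down the stratification $Y = \coprod_{J \subseteq I} E_J^o$, where for $J = \emptyset$ the stratum $E_\emptyset^o$ is the open complement of $E \cup \widetilde{V}$. Since each point of $Y$ belongs to a unique open stratum, we obtain a disjoint decomposition
\begin{equation*}
Y_\infty = \coprod_{J \subseteq I} \{ \gamma' \in Y_\infty \, | \, \gamma'(\operatorname{Spec}(\C)) \in E_J^o \}.
\end{equation*}
This partition of $Y_\infty$ will give the disjointness of the $U_J^k$ for free, since the condition on the starting point of the arc already distinguishes the pieces.

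Next, I would use the bijection $\pi_\infty$ from \cite[Proposition 4.4.2]{greenbook} to identify $\pi^{-1}(A_k)$ with the set of arcs $\gamma' \in Y_\infty$ whose image $\gamma = \pi(\gamma') \in W_\infty$ satisfies $ord_0(\gamma) = k$. By Proposition \ref{thm-val}, this valuation is transported exactly, and by Equation (\ref{croo}), if $J \subseteq I$ denotes the unique subset such that $\gamma'(\operatorname{Spec}(\C)) \in E_J^o$, then
\begin{equation*}
ord_0(\gamma) = \sum_{j \in J} m_j \, ord_{E_j}(\gamma').
\end{equation*}
Combining these, $\gamma' \in \pi^{-1}(A_k)$ if and only if its starting point lies in some $E_J^o$ and the sum $\sum_{j \in J} m_j \, ord_{E_j}(\gamma')$ equals $k$, which is precisely the membership condition in $U_J^k$. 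Intersecting with each stratum and taking the disjoint union over $J$ yields the claimed decomposition.

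I do not expect any real obstacle, since the substantive work has been done in Proposition \ref{thm-val} and Equation (\ref{croo}); the only points to handle carefully are the correct interpretation of the $J = \emptyset$ case (arcs starting in the smooth open locus, for which $ord_0(\gamma) = 0$ so $U_\emptyset^k = \emptyset$ when $k \geq 1$, consistent with the empty sum convention) and the inclusion of the strict transform $\widetilde{V}$ among the components with $m_{\widetilde{V}} = 0$, so that arcs starting on $\widetilde{V}$ but transverse to the other $E_j$ contribute correctly. Once these bookkeeping conventions are fixed, the equality is immediate from the order formula and the uniqueness of the stratum containing $\gamma'(\operatorname{Spec}(\C))$.
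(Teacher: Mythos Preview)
Your proposal is correct and follows exactly the approach indicated in the paper: the corollary is stated there as an immediate consequence of Proposition \ref{thm-val} and Equation (\ref{croo}), with no further argument given. Your write-up simply spells out the stratification and bookkeeping that the paper leaves implicit, so there is nothing to add.
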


\subsubsection{Curvilinear Denef's formula} To conclude this section we consider $A=A_1$ since it constitutes the domain of integration of the Igusa zeta function studied in Section \ref{curv} and specifically with Theorem \ref{thm-igusa-Q}.
Applying Corollary \ref{crl-den} in combination with the change of variables of Theorem \ref{thm-change}, we immediately notice that only arcs of valuation 1 along divisors of order 1, over $(E_i \setminus \widetilde{V})^o$ or $(E_i \cap \widetilde{V})^o$ for divisors $E_i$ with $m_i=1$, are in bijection with $A$ under $\pi_\infty$. From this, we can get an explicit formulation for the curvilinear Igusa zeta function of Theorem \ref{thm-igusa-Q} in terms of motivic classes of divisors.

\begin{thm} \label{thm-denef}
Let $(W,0)$ be a smooth variety of dimension $n$ and let $V \subseteq W$ be a closed subvariety with $0\in V$.
Let $\pi: (Y, E) \rightarrow (W,V)$ be an embedded resolution of the pair $(W,V)$ as in Definition \ref{def-order} and let $\widetilde{V}$ denote the strict transform of $V$ under $\pi: Y \to W$. 
Then
$$ \displaystyle\int_{A} (\Li^{-ord_V})^s d\mu = \left(\frac{1}{\Li} - \frac{1}{\Li^2} \right)\Li^{-(n-1)} \displaystyle\sum_{i \in S} \frac{1}{\Li^{N_i s + \nu_i}} \left(  [(E_i \setminus \widetilde{V})^o]   +   \frac{1}{\Li^{s+1} - 1} [(E_i \cap \widetilde{V})^o](\Li -1)   \right) .
$$ 
where $S$ denotes the set of divisors of order $1$. 
\end{thm}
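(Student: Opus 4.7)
The plan is to apply the motivic change of variables (Theorem \ref{thm-change}) along the embedded resolution $\pi \colon Y \to W$ and then invoke Corollary \ref{crl-den} to rewrite $\pi_\infty^{-1}(A)$ as a disjoint union of strata adapted to the simple normal crossing divisor $\pi^{-1}(V)$. After the change of variables the integrand becomes $\Li^{-s \cdot (ord_V \circ \pi_\infty) - ord_{K_{Y|W}}}$; using that $\pi^{*}\mathcal{I}_V$ cuts out $\widetilde V + \sum_j N_j E_j$ and the definitions of $N_j, \nu_j$ in Definition \ref{ddef-data}, for an arc $\gamma' \in Y_\infty$ whose origin lies in $E_J^o$ one has $ord_V(\pi_\infty \gamma') = \sum_{j \in J \, \text{exceptional}} N_j \, ord_{E_j}(\gamma') + ord_{\widetilde V}(\gamma')$ and $ord_{K_{Y|W}}(\gamma') = \sum_{j \in J \, \text{exceptional}} \nu_j \, ord_{E_j}(\gamma')$ (with $\widetilde V$ contributing $0$ to $K_{Y|W}$).

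Next, I would specialize Corollary \ref{crl-den} to $A = A_1$. The equation $\sum_{j \in J} m_j \, ord_{E_j}(\gamma') = 1$, combined with $m_{strict} = 0$ and $m_j \geq 1$ for exceptional $j$, forces $J$ to contain a unique exceptional divisor $E_i$ with $i \in S$ and $ord_{E_i}(\gamma') = 1$, while the strict transform $\widetilde V$ may or may not belong to $J$ independently. This stratifies $\pi_\infty^{-1}(A)$ into two families indexed by $i \in S$: a \emph{disjoint} piece where $\gamma'(0) \in (E_i \setminus \widetilde V)^o$ with no constraint on $ord_{\widetilde V}$, and a \emph{meeting} piece where $\gamma'(0) \in (E_i \cap \widetilde V)^o$ and $ord_{\widetilde V}(\gamma') = m$ for some $m \geq 1$. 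The integrand evaluates to $\Li^{-sN_i - \nu_i}$ on the disjoint piece and to $\Li^{-s(N_i + m) - \nu_i}$ on the $m$-th meeting piece.

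For each stratum I would compute the motivic measure via the smooth-variety formula $\mu(\pi_k^{-1}(C)) = [C]/\Li^{nk}$. Choosing local snc coordinates $(y_1, \dots, y_n)$ with $E_i = \{y_1 = 0\}$ (and, in the meeting case, also $\widetilde V = \{y_2 = 0\}$), truncating at jet level $2$ respectively $m+1$ and collecting a $\mathbb{G}_m$-factor for each leading coefficient prescribed to be nonzero yields
$$\mu\bigl(\text{disjoint piece}\bigr) = (\Li - 1)\,[(E_i \setminus \widetilde V)^o]\,\Li^{-(n+1)}, \qquad \mu\bigl(m\text{-th meeting piece}\bigr) = (\Li - 1)^2\,[(E_i \cap \widetilde V)^o]\,\Li^{-(m+n+1)}.$$

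Finally, I would sum the geometric series over $m \geq 1$ in the meeting case, which yields the factor $\sum_{m \geq 1} \Li^{-(s+1)m} = 1/(\Li^{s+1} - 1)$, and then sum over $i \in S$. Extracting the common prefactor $(\Li - 1)\Li^{-(n+1)} = (\Li^{-1} - \Li^{-2})\,\Li^{-(n-1)}$ reproduces exactly the claimed formula. The main obstacle is the careful local jet-count in the meeting case: one must justify that snc coordinates transverse to both $E_i$ and $\widetilde V$ may be chosen so that each cylinder really is measurable with the asserted class, and verify that the geometric series in $m$ converges in $\widehat{\mathbf{M}}_{\mathbf{0}}[[T]]$ as required by Definition \ref{def-igusa}; everything else is change of variables followed by straightforward algebraic bookkeeping.
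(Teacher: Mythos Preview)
Your proposal is correct and follows essentially the same route as the paper: change of variables along $\pi$, then invoke Corollary \ref{crl-den} for $k=1$ to see that $\pi_\infty^{-1}(A)$ decomposes into the pieces $A_{i,m}$ with $i\in S$, $ord_{E_i}=1$, $ord_{\widetilde V}=m\ge 0$, compute $\mu(A_{i,m})$ via local snc coordinates, and sum the geometric series in $m$. The only difference is cosmetic—you spell out explicitly why $\sum_j m_j\,ord_{E_j}=1$ forces a single $E_i$ with $i\in S$, whereas the paper records this observation more tersely and refers to \cite[Lemma 3.3.3, Theorem 3.3.4]{greenbook} for the measure computation you do by hand.
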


\begin{proof}
We recall Theorem \ref{thm-change}, that tells us that changing variables along $\pi$ we have $$\displaystyle\int_{A} (\Li^{-ord_V})^s d\mu = \displaystyle\int_{\pi^{-1}(A)} \Li^{-s \cdot ord_V \circ \pi_\infty |_{\pi^{-1}(A)} - ord_{K_{ Y | X}}} d\mu \; \; \; (\star).$$
We recall that the domain $A = \pi_2^{-1}(0) \setminus \pi_1^{-1}(0)$ corresponds to the subset of arcs of order $1$ in $W_{\infty}$ so from Corollary \ref{crl-den} and further partitioning with respect to $ord_{\widetilde{V}}$ we have
$$\pi^{-1}(A) = \displaystyle\coprod_{i \in S, \, m \geq 0} A_{i,m}$$ where $A_{i,m} = U_{i,strict}^k$ denotes the set of arcs $\gamma \in Y_{\infty}$ such that $ord_{E_i}(\gamma) = 1$ and $ord_{\widetilde{V}} (\gamma) = m$.
Then, to compute this integral we can simply follow the classic approach of \cite[Lemma 3.3.3, Theorem 3.3.4]{greenbook} applying them to solely divisors of order $1$ and the strict transform (namely, the divisors $E_i + \widetilde{V}$ for $i \in S$).  
We have that
$$\mu(A_{i,0})= \left(\frac{1}{\Li} - \frac{1}{\Li^2}\right)\Li^{-(n-1)} [(E_i \setminus \widetilde{V})^o]$$
since following notations of \cite[Lemma 3.3.3]{greenbook} the set $A_{i,0}$ is the inverse image of $(\pi^2_1)^{-1}(0) \setminus (\pi^2_2)^{-1}(0)$ in the locally closed subset of 2-jets defined by $x_i=0$,
and for $m\geq 1$ 
$$\mu(A_{i,m})= \left(\frac{1}{\Li} - \frac{1}{\Li^2}\right) \Li^{-(n-1)-m} [(E_i \cap \widetilde{V})^o](\Li -1)$$
since the set $A_{i,m}$ is the inverse image of $(\pi^2_1)^{-1}(0) \setminus (\pi^2_2)^{-1}(0)$ in the locally closed subset of 2-jets defined by $x_i=0, x_{\widetilde{V}}=0$.
Therefore, the motivic integral $(\star)$ becomes
$$ (\star) = \displaystyle\sum_{(i,m) \in S \times \mathbb N} \mu(A_{i,m}) \Li^{-(N_i + m)s - \nu_i}
 = $$ 
 $$\left(\frac{1}{\Li} - \frac{1}{\Li^2}\right) \Li^{-(n-1)}\displaystyle\sum_{i \in S} 
 \left( \frac{1}{\Li^{N_i s + \nu_i}}[(E_i \setminus \widetilde{V})^o] + [(E_i \cap \widetilde{V})^o](\Li -1)\displaystyle\sum_{m \geq 1} \Li^{-(N_i + m)s -\nu_i} \Li^{-m} \right) =
 $$ 
 $$\left(\frac{1}{\Li} - \frac{1}{\Li^2}\right) \Li^{-(n-1)}\displaystyle\sum_{i \in S} \frac{1}{\Li^{N_i s + \nu_i}}
 \left( [(E_i \setminus \widetilde{V})^o] + [(E_i \cap \widetilde{V})^o](\Li -1)\displaystyle\sum_{m \geq 1} \Li^{-m(s +1)} \right) =
 $$
 $$
 \left(\frac{1}{\Li} - \frac{1}{\Li^2}\right)\Li^{-(n-1)} \displaystyle\sum_{i \in S} \frac{1}{\Li^{N_i s + \nu_i}} \left(  [(E_i \setminus \widetilde{V})^o]   +   \frac{1}{\Li^{s+1}-1} [(E_i \cap \widetilde{V})^o](\Li -1)   \right) .$$
\end{proof}

Combining Theorem \ref{thm-igusa-Q} and Theorem \ref{thm-denef} we can also compare the series of motivic classes of curvilinear Hilbert schemes directly with motivic classes of divisors. 

\begin{crl}
We have the relationship 
$$ (1 - \Li^s) Q_V( \Li^{-s - (n-1)} ) =$$
$$
\Li^{-(n-1)}\displaystyle\sum_{i \in S}  \left( \Li^{-N_i s}\Li^{-\nu_i}\left(   [(E_i \setminus \widetilde{V})^o]   +  \frac{1}{\Li^{s+1} -1 } [(E_i \cap \widetilde{V})^o] (\Li - 1)  \right)  \right)     +     \Li^{-s} \frac{\Li^{-3n+2} - \Li^{-2n+2}}{\Li -1} . $$ 
\end{crl}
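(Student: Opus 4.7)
The plan is straightforward: this corollary is purely a bookkeeping consequence of the two preceding theorems. Specifically, Theorem \ref{thm-igusa-Q} expresses the Igusa zeta function $\int_A (\Li^{-ord_V})^s d\mu$ in terms of $Q_V(\Li^{-s-(n-1)})$, while Theorem \ref{thm-denef} expresses the same integral in terms of motivic classes of the exceptional divisors of the embedded resolution. Equating these two expressions and solving for $(1 - \Li^s) Q_V(\Li^{-s-(n-1)})$ will yield the claimed identity.

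Concretely, I would first rearrange Theorem \ref{thm-igusa-Q} to isolate the factor $\frac{\Li - 1}{\Li^2}(1 - \Li^s) Q_V(\Li^{-s-(n-1)})$ on one side, moving the constant contribution $\frac{1}{\Li^s}\frac{1}{\Li^n}\left(\frac{1}{\Li^n} - \frac{1}{\Li^{2n}}\right)$ to the other side. Then I would multiply through by $\frac{\Li^2}{\Li - 1}$ to obtain $(1 - \Li^s) Q_V(\Li^{-s-(n-1)})$ alone. Next, I would substitute the expression for $\int_A(\Li^{-ord_V})^s d\mu$ provided by Theorem \ref{thm-denef} into the right-hand side.

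The key simplification is that the prefactor $\frac{\Li^2}{\Li - 1}$ applied to Theorem \ref{thm-denef}'s expression cancels the factor $\left(\frac{1}{\Li} - \frac{1}{\Li^2}\right) = \frac{\Li - 1}{\Li^2}$ appearing there, leaving only $\Li^{-(n-1)} \sum_{i \in S} \Li^{-N_i s - \nu_i}\left([(E_i \setminus \widetilde{V})^o] + \frac{1}{\Li^{s+1} - 1}[(E_i \cap \widetilde{V})^o](\Li - 1)\right)$, which is precisely the first term of the stated identity. The correction term then becomes $-\frac{\Li^2}{\Li - 1} \cdot \frac{1}{\Li^s}\frac{1}{\Li^n}\left(\frac{1}{\Li^n} - \frac{1}{\Li^{2n}}\right)$, which I would simplify by combining powers of $\Li$ to obtain $\Li^{-s}\frac{\Li^{-3n+2} - \Li^{-2n+2}}{\Li - 1}$, matching the target (the sign flip comes from distributing the minus sign across the difference in parentheses).

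There is essentially no conceptual obstacle here; the statement is a direct algebraic consequence of combining Theorems \ref{thm-igusa-Q} and \ref{thm-denef}. The only care needed is in tracking the sign when distributing the minus sign into $\left(\Li^{-2n} - \Li^{-4n}\right)$-style terms and verifying that the prefactor cancellation $\frac{\Li^2}{\Li - 1} \cdot \frac{\Li - 1}{\Li^2} = 1$ is applied correctly.
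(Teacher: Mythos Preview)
Your proposal is correct and matches the paper's approach exactly: the paper simply states that the corollary follows from combining Theorem~\ref{thm-igusa-Q} and Theorem~\ref{thm-denef}, which is precisely the rearrangement-and-substitution argument you outline. The algebraic bookkeeping you describe (the cancellation $\frac{\Li^2}{\Li-1}\cdot\frac{\Li-1}{\Li^2}=1$ and the sign flip in the correction term) is accurate.
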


In particular, the equality of the corollary above is an equality of formal power series in $\Li^{-s}$. Unraveling this equality and explicating the geometric series for $\displaystyle\frac{1}{\Li^{s+1} -1}$, the equality descends to the coefficients of fixed degree terms, obtaining a recursive relationship that computes all the motivic classes of curvilinear Hilbert schemes in terms of motivic classes of divisors. 

\begin{crl} \label{rec}
For $k \geq 2$ we have 
$$\Li^{-(n-1)k} [\CHilb^k_0(V)] - \Li^{-(k+1)(n-1)}[\CHilb^{k+1}_0(V)]) = $$
    $$
\Li^{-(n-1)} \left( 
\displaystyle\sum_{i \in S \, | \, N_i = k} 
\Li^{-\nu_i}[(E_i \setminus \widetilde{V})^o] \, \,
+ \, \,
\displaystyle\sum_{i \in S , \, m  \geq 1 \, | \, N_i +m = k } 
\Li^{ -\nu_i -m}(\Li - 1)[(E_i \cap \widetilde{V})^o] \right) .
$$
\end{crl}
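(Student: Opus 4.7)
The plan is to read off the equality in Corollary \ref{rec} as the coefficient of $T^k$, for each $k \geq 2$, in the identity of formal power series supplied by the previous corollary, where we set $T = \Li^{-s}$. Both sides of that identity lie naturally in $\widehat{\mathbf{M}}_{\mathbf{0}}[[T]]$, and once the one non-polynomial rational expression appearing on the right is expanded as a geometric series, the claim becomes a coefficient comparison.

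For the left-hand side, I would substitute the definition of $Q_V$ to get
\begin{equation*}
(1-\Li^s)\, Q_V(\Li^{-s-(n-1)}) = \sum_{k \geq 2}\Li^{-(n-1)k}[\CHilb^k_0(V)]\,T^k \;-\; T^{-1}\sum_{k \geq 2}\Li^{-(n-1)k}[\CHilb^k_0(V)]\,T^k ,
\end{equation*}
and then reindex the second sum via $k \mapsto k+1$. For every $k \geq 2$ the coefficient of $T^k$ then equals $\Li^{-(n-1)k}[\CHilb^k_0(V)] - \Li^{-(n-1)(k+1)}[\CHilb^{k+1}_0(V)]$, which is precisely the left-hand side of the statement.

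For the right-hand side, the only non-polynomial expression in $T$ is $1/(\Li^{s+1}-1)$, for which I would use the expansion
\begin{equation*}
\frac{1}{\Li^{s+1}-1} = \frac{\Li^{-(s+1)}}{1-\Li^{-(s+1)}} = \sum_{m \geq 1}\Li^{-m}\,T^m
\end{equation*}
in $\widehat{\mathbf{M}}_{\mathbf{0}}[[T]]$. Plugging this in, the main part of the right-hand side becomes
\begin{equation*}
\Li^{-(n-1)}\sum_{i \in S}\Li^{-\nu_i}\Bigl([(E_i \setminus \widetilde{V})^o]\,T^{N_i} \;+\; (\Li-1)[(E_i \cap \widetilde{V})^o]\sum_{m \geq 1}\Li^{-m}\,T^{N_i+m}\Bigr),
\end{equation*}
while the residual $\Li^{-s}(\Li^{-3n+2}-\Li^{-2n+2})/(\Li-1)$ is a constant multiple of $T$ and so contributes nothing to the coefficient of $T^k$ for $k \geq 2$. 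Collecting terms of total $T$-degree equal to $k$ then yields exactly the right-hand side of Corollary \ref{rec}.

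The only delicate point is the bookkeeping: one must confirm that the reindexing on the left does not leak a $k=1$ boundary term into the $k \geq 2$ coefficients, and that the residual term indeed only affects the coefficient of $T^1$ and hence may be safely ignored. Once these checks are in place, equating the coefficient of $T^k$ on both sides for each $k \geq 2$ gives the stated recursion.
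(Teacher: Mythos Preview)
Your proposal is correct and follows exactly the approach the paper indicates: the paper states that the corollary is obtained by ``explicating the geometric series for $\frac{1}{\Li^{s+1}-1}$'' in the preceding identity and then comparing coefficients of $\Li^{-s}$. Your write-up carries this out in detail, including the reindexing on the left and the observation that the residual $T^1$-term does not interfere with the $k\geq 2$ coefficients.
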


\begin{rmk} \label{wei}
Having an equality of power series in motivic classes leading to an explicit description of $[\CHilb^k_0(V)]$, we can apply morphisms of the Grothendieck ring of varieties to them and the relationship of Corollary \ref{rec} to obtain more specialized information about $\CHilb^k_0(V)$. An interesting example can consider weight polynomials $$ X \mapsto \omega(X) = \omega(X) (t) = \sum_{j,k \geq 0} (-1)^{j+k} t^k \dimm \operatorname{Gr}_W^k (\operatorname{H}^j_c(X))$$ as defined in \cite{hodge}, and combining the results of Theorem \ref{thm-igusa-Q} and Theorem \ref{thm-denef} we have the possibility of expressing weight polynomials $\omega(\CHilb^k_0(X))$ of curvilinear Hilbert schemes in terms of the geometry of divisors. 
We are especially interested in the generating series of weight polynomials in the case of plane curve singularities because of the conjecture presented by Oblomkov, Rasmussen and Shende in \cite{ors} trying to give to the series of weight polynomials of Hilbert schemes an interpretation in terms of the topology of the singularity.
\end{rmk}

We now move on to a singular class of examples, that includes the cusp singularity of Example \ref{cus2}.
\begin{ex}
We consider the plane curve defined by $f(x,y)=x^2 - y^{2k+1}$.
Computing an embedded resolution graph, it looks like
\vspace{5mm}
\begin{center}
\begin{tikzpicture}
\tikzstyle{every node}=[draw,shape=circle,fill,inner sep=2pt]
\draw (0,0) node[label=below:$E_{1\phantom{11}}$]{} -- (2,0) node[label=below:$E_{2\phantom{11}}$]{};
\draw (2,0) -- (4,0) node[label=below:$E_{3\phantom{11}}$]{};
\draw[dotted] (4,0) -- (6,0) node[label=below:$E_{k-1}$]{};
\draw (6,0) -- (8,0) node[label=below:$E_{k\phantom{11}}$]{};
\draw (8,0) -- (10,0) node[label=below:$E_{k+2}$]{};
\draw (10,0) -- (10,1) node[label=above:$\widetilde{V}$]{};
\draw (10,0) -- (12,0) node[label=below:$E_{k+1}$]{};
\end{tikzpicture}
\end{center}
and the resolution data is given by 
\begin{center}
    $(N_j, \nu_j) = (2j, j)$ for $j=1 \dots k$\\
    $(N_{k+1}, \nu_{k+1}) = (2k+1 , k+1)$.
\end{center}
We discard $E_{k+2}$ since $m_{k+2}=2$.
The corresponding Igusa zeta function can computed from the resolution using Theorem \ref{thm-denef}, just observing that
\begin{center}
    $[(E_i \setminus \widetilde{V})^o] = \Li -1$ for $j=2, \dots , k$\\
    $[(E_i \setminus \widetilde{V})^o] = \Li $ for $j=2, 2k+1$.
\end{center}
Then, to eventually compute the series $Q_C(T)$ one can directly check that $H_2 = \Li+1$, $H_3 = \Li^2$ and since there are no divisors of odd multiplicity from the recursive formula of Corollary \ref{rec} we have 
\begin{center}
    $H_{2j} = \Li H_{2j-1}$\\
    $H_{2j+1} = H_{2j} = \Li^j$ for $j=2 \dots k$
\end{center}
when we stop before becoming empty. 
\end{ex}

To conclude this section, we compute and compare the motivic series of Theorem \ref{thm-denef} on an example.
\begin{ex}
In the same setting as Example \ref{ex-cusp} the only divisors of order $1$ are $E_1$ and $E_2$. Using the recursive formula of Corollary \ref{rec} and computing manually $[\CHilb^2_0(C)] = \Li +1$ we obtain $[\CHilb^3_0(C)] = \Li$ and $[\CHilb^k_0(C)] = 0 \text{ for all } k \geq 4 $ thus we have $$\displaystyle\int_A (\Li^{-ord_C})^s d\mu = 
    \left(\frac{1}{\Li} - \frac{1}{\Li^2}\right)
    \left(
    \displaystyle\frac{1}{\Li^{2 s + 1}}
    +
    \displaystyle\frac{1}{\Li^{3 s + 2}}
    \right) .$$
\end{ex}

\section{Application: plane curve singularities} \label{ex}

This section will be dedicated to further exploring the objects and results of Section \ref{curv} in the case of plane curve singularities. Unless otherwise stated, $(C,0)$ will always denote a complex plane curve defined as the zero locus of a polynomial $f \in \mathbb C[x,y]$, $f(0)=0$ and sometimes additionally an isolated reduced singularity. 
We denote by $f_d$ the $d$-th degree homogeneous component of $f$, and the smallest $d >0$ such that $f_d \neq 0$ is called multiplicity of the singularity and denoted by $mult_C(0)$.

We are naturally interested in studying the geometric properties of plane curves, corresponding to studying the data coming from its resolution of singularities as presented in Section \ref{resol}, as well as the topological properties of its algebraic link introduced by Milnor in \cite{milnor} - which turn out to be equivalent to each other as proved in \cite[Chapter III.8]{Brieskorn}.

\begin{df} \label{def-link}
Let $(C, 0)$ be an isolated reduced complex plane curve singularity.
A sphere of radius $\epsilon$ centered at the singularity and the given curve intersect transversally for $\epsilon$ small enough, therefore we can consider the nonsingular oriented one-dimensional submanifold  
$$\mathcal{L} = S^3_{\epsilon , 0} \cap C $$
that is called the algebraic link of the singularity.
\end{df}

\begin{thm} \label{thm-bries}
Given an isolated complex plane curve singularity, the following data are equivalent and fully determine the singularity:
\begin{itemize}
    \item Its algebraic link characterized as iterated torus knots and linking numbers;
    \item The Puiseux pairs and intersection numbers of the branches of the curve;
    \item The resolution graph decorated with multiplicities.
\end{itemize}
\end{thm}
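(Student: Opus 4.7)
The strategy is to prove the three-way equivalence by establishing a cycle of implications through the Puiseux data, which acts as the natural intermediate object: the Puiseux pairs of each branch together with pairwise intersection multiplicities carry just enough combinatorial information to be read off both from the link and from the resolution graph. I would first reduce to the irreducible case (a single branch) and then reassemble the multi-branch statement at the end.

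For a single branch, the plan is to fix a Puiseux parametrization $t \mapsto (t^m, \sum_{i} a_i t^{n_i})$ and extract from it the Puiseux pairs $(m_k, n_k)$ in the standard way. The link is obtained by intersecting with a small sphere $S^3_\epsilon$, and I would argue inductively that if one truncates the Puiseux series after the $k$-th characteristic exponent one obtains a torus knot on a small tubular neighborhood of the preceding iterate, so the full link is an iterated torus knot whose cabling parameters are the classical transforms of the Puiseux pairs (the usual $(m_k, n_k) \leftrightarrow (p_k, q_k)$ correspondence involving products of earlier denominators). Conversely, the iterated torus structure rigidifies the link enough that the cabling data recovers the Puiseux pairs, so (1) $\Leftrightarrow$ (2) for a single branch.

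For (2) $\Leftrightarrow$ (3), I would analyze what happens to a Puiseux parametrization under a single blow-up at the origin. In local charts the blow-up replaces $(t^m, t^n + \cdots)$ by its strict transform, whose multiplicity either stays the same or drops according to the arithmetic of $m$ and $n$; iterating this and tracking the Euclidean algorithm run on the Puiseux pairs produces exactly the sequence of exceptional divisors, their multiplicities along the total transform, and the dual graph. Conversely, walking the resolution graph from the strict transform back to the first exceptional component and reading the multiplicities $N_i$ reconstructs the continued-fraction expansions encoding each Puiseux pair, so the decorated graph and the Puiseux data determine each other.

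For the multi-branch case I would handle the new piece of data — the pairwise intersection multiplicities $(C_i \cdot C_j)_0$ — by identifying it simultaneously with the linking number of the corresponding link components (a standard Bezout-type computation on small spheres, since $(C_i \cdot C_j)_0$ is the degree of the Gauss map on the tubular neighborhood) and with a sum of products of multiplicities along the common part of the resolution (the classical formula $(C_i \cdot C_j)_0 = \sum_v m_v^{(i)} m_v^{(j)}$ summed over exceptional components shared by both strict transforms). The main obstacle throughout is the bookkeeping in the Puiseux-to-cabling translation: the indices $(p_k, q_k)$ of the iterated torus structure are \emph{not} literally the Puiseux pairs but inductive combinations of them, and one must verify carefully that the induction closes and that no information is lost at the splittings between branches. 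Once that translation dictionary is established, the equivalence of all three descriptions follows, and the classical reference \cite{Brieskorn} completes the details.
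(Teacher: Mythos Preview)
The paper does not give its own proof of this theorem: it is stated as a classical result and attributed directly to \cite[Chapter III.8]{Brieskorn}, with no argument supplied in the text. Your proposal is therefore not something to compare against a proof in the paper, because there is none; what you have written is a reasonable outline of the standard argument one finds in Brieskorn--Kn\"orrer (Puiseux data as the pivot, iterated cabling for the link side, Euclidean-algorithm bookkeeping under blow-up for the resolution side, and the identification of linking numbers with intersection multiplicities for the multi-branch case). As a sketch of the classical proof it is broadly correct, with the usual caveat that the cabling-to-Puiseux dictionary and the multiplicity bookkeeping are where all the actual work lies.
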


Furthermore, in Theorem \ref{thm-denef} we presented a relationship between motivic classes of curvilinear Hilbert schemes and the geometric data on an embedded resolution of singularities i.e. the divisors appearing from the resolution process. In the case of complex plane curves, this also guarantees that $[\CHilb^k_0(C)]$ is a polynomial in $\Li$ and because of the equivalence between the geometric and the topological data just mentioned for plane curves, the polynomials $[\CHilb^k_0(C)]$ (and consequently their weight polynomials, see Section \ref{val}) are expected to have a topological meaning for isolated reduced singularities.
\begin{crl}
Let $(C, 0)$ be a complex plane curve.
For any $k>1$ we have that $[\CHilb^k_0(C)]$ is polynomial in $\Li$ and assuming $(C,0)$ is an isolated reduced complex plane curve singularity, the polynomial $[\CHilb^k_0(C)]$ is a topological invariant of $(C,0)$.
\end{crl}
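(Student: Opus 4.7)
The plan is to prove both assertions by induction on $k\geq 2$, using the recursive relation of Corollary~\ref{rec} specialized to $n=2$. For the base case, the class $[\CHilb^2_0(C)]$ can be computed directly: the curvilinear Hilbert scheme of length two is essentially a projectivized tangent cone, and its class is a polynomial in $\Li$ depending only on $\mathrm{mult}_C(0)$.

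For the inductive step, specializing Corollary~\ref{rec} at $n=2$ and clearing denominators by multiplying through by $\Li^{k+1}$ rewrites the recursion as
\begin{equation*}
\Li\cdot[\CHilb^k_0(C)] - [\CHilb^{k+1}_0(C)] = \sum_{\substack{i\in S \\ N_i=k}} \Li^{N_i-\nu_i}[(E_i\setminus\widetilde{C})^o] + \sum_{\substack{i\in S,\, m\geq 1 \\ N_i+m=k}} \Li^{N_i-\nu_i}(\Li-1)[(E_i\cap\widetilde{C})^o].
\end{equation*}
An embedded resolution of a plane curve singularity is a composition of point blow-ups, hence each exceptional component $E_i$ is isomorphic to $\mathbb{P}^1$ and the intersections $E_i\cap\widetilde{C}$ and $E_i\cap E_j$ are finite sets of reduced points. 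Consequently $[(E_i\setminus\widetilde{C})^o]$ and $[(E_i\cap\widetilde{C})^o]$ both lie in $\mathbb{Z}[\Li]$. Together with the inequality $N_i\geq\nu_i$ for every order-one divisor $E_i$ (which holds since each point blow-up increases $\nu_i$ by one while increasing $N_i$ by the current multiplicity of the total transform at the center, which is always at least one), this shows the right-hand side is a polynomial in $\Li$ with integer coefficients; by the inductive hypothesis, so is $[\CHilb^{k+1}_0(C)]$.

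For the topological invariance in the isolated reduced case, each quantity appearing on the right-hand side of the recursion---the set $S$, the multiplicities $N_i$, the discrepancies $\nu_i$, and the incidence pattern between the $E_i$'s and $\widetilde{C}$ (which determines both divisor classes)---is encoded in the decorated embedded resolution graph of $(C,0)$. By Theorem~\ref{thm-bries}, this graph is equivalent to the topological type of the algebraic link of $(C,0)$, so $[\CHilb^k_0(C)]$ depends only on the link. The main subtlety I foresee is precisely the verification that the inductive step preserves polynomiality: rather than inspecting individual terms in the recursion (which a priori involve negative powers of $\Li$), one must argue globally via the inequality $N_i\geq\nu_i$ for order-one divisors on plane-curve resolutions, a classical feature of point blow-ups on smooth surfaces that needs to be recorded carefully.
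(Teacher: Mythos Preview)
Your proposal is correct and follows essentially the same route as the paper: induction on $k$ via Corollary~\ref{rec}, a direct computation of the base case $k=2$, the observation that each $E_i\cong\mathbb{P}^1$ so that $[(E_i\setminus\widetilde{C})^o]$ and $[(E_i\cap\widetilde{C})^o]$ lie in $\mathbb{Z}[\Li]$, and the appeal to Theorem~\ref{thm-bries} for topological invariance. Two minor remarks: for $k=2$ the curvilinear Hilbert scheme is the projectivized Zariski \emph{tangent space} rather than the tangent cone (so it is $\mathbb{P}^1$ when $C$ is singular and a point when smooth, exactly as the paper computes); and your inequality $N_i\geq\nu_i$ for order-one divisors is an ingredient the paper's proof leaves implicit---it is what guarantees that clearing denominators produces a genuine polynomial on the right-hand side, and it does hold for the standard resolution by successive blow-ups centered on the strict transform.
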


\begin{proof}
From Corollary \ref{rec} we recall that $$\Li^{-(n-1)k} [\CHilb^k_0(C)] - \Li^{-(k+1)(n-1)}[\CHilb^{k+1}_0(C)]) = $$
$$
\Li^{-(n-1)} \left( 
\displaystyle\sum_{i \in S \, | \, N_i = k} 
\Li^{-\nu_i}[(E_i \setminus \widetilde{V})^o] \, \,
+ \, \,
\displaystyle\sum_{i \in S , \, m  \geq 1 \, | \, N_i +m = k } 
\Li^{ -\nu_i -m}(\Li - 1)[(E_i \cap \widetilde{V})^o] \right)
$$ and we can prove the polynomiality in $\Li$ of $[\CHilb^k_0(C)]$ by proving it for all the motivic classes involved in the recursive formula.
We start with $[\CHilb^2_0(C)] = \displaystyle\frac{[S^2_0(C)]}{\Li - 1}$. In $S^2_0(C)$ we have the 2-jets $(at, bt) \in (\C[t]/(t^2))^2$ such that $(a,b) \neq (0,0)$ and such that $f(at,bt) = 0$ in $\C[t]/(t^2)$.
If $C$ is smooth then $S^2_0(C) \cong \mathbb \{ 0\} \times A^1  \setminus \{ 0\}$ hence $$[\CHilb^2_0(C)] = \frac{\Li -1}{\Li - 1} = 1$$ and if $C$ is singular then $S^2_0(C) \cong \mathbb A^2 \setminus \{ (0,0)\}$ hence $$[\CHilb^2_0(C)] = \frac{\Li^2 -1}{\Li - 1} = \Li  +1.$$
Then, we move on to divisors. Since they can only be $\mathbb P^1$, $(E_i \setminus \widetilde{V})^o$ is a $\mathbb P^1$ with a finite number $k_i$ of points removed, thus $$[(E_i \setminus \widetilde{V})^o] = (\Li +1) - k_i.$$ Then, since they are transversal, the intersection $E_i \cap \widetilde{V}$ corresponds to a finite number of points $s_i$ and $$[(E_i \cap \widetilde{V})^o] = s_i.$$
Finally, the equivalence between the geometric and the topological data comes from Theorem \ref{thm-bries}.
\end{proof}

This expectation is also partially met in a conjecture formulated by Oblomkov, Rasmussen and Shende in \cite{ors} but not in the curvilinear setting. The conjecture arises from another possible approach to study links, that makes use of homological invariant theories such as the triply-graded link homology introduced by Khovanov and Rozansky in \cite{kr1,kr2} and polynomial invariants built from these homology theories. 
In this different setting, (weight polynomials of) Hilbert schemes at the curve singularity come again into play offering a fruitful geometric framework to give link invariants a geometric interpretation. In this direction we can also find specialized theorems for other classical polynomial invariants by Campillo, Delgado and Gusein-Zade in \cite{cdgz} and by Maulik in \cite{mau}.

\subsection{Properties of the curvilinear Hilbert scheme of plane curve singularities}

We conclude by studying some special properties of the curvilinear Hilbert schemes of plane curve singularities and computing some examples. 

We can say from the relationship presented in Theorem \ref{thm-iso2} that $ I = (g_1 , \dots , g_n) = \operatorname{ker} (\phi_I) \in \CHilb^k_0(C)$
for $\phi_I: \mathbb C [x,y] \to \mathbb C[t] / (t^k)$ the parametrization of the smooth curve $g_i$.
As a consequence, when imposing $f \in I$ the parametrization fails to describe the ideal unless $f \in m^n$.  
This forces the curvilinear component of the Hilbert scheme to be empty and an exact threshold when this happens is obtained next.

In particular, this result ensures that the curvilinear series $Q_C(T)$ of Definition \ref{def-ser} is a polynomial without relying on the formula proved in Theorem \ref{thm-denef}.
Vice versa, the same threshold can be obtained as the top degree of the motivic description presented in Theorem \ref{thm-denef}. 

\begin{thm} \label{thm-emp}
Let $(C, 0)$ be a complex plane curve singularity with no smooth branches and let $$N = \max \{ N_i \, | \, i \in S \}$$ for an embedded resolution as in Theorem \ref{thm-denef}. Then, $\CHilb^k_0(C)$ is empty for any $k>N$ and this bound is optimal.
\end{thm}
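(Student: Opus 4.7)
The plan is to reduce the emptiness question to a bound on $ord_V(\gamma)$ over smooth arcs through $0$, translate it to the resolution side, and then use the no-smooth-branches hypothesis. By Corollary \ref{thm-iso2} (together with the description of $Q_0^k$ via parametrizations in Remark \ref{loc}), we have $\CHilb^k_0(C) \neq \emptyset$ if and only if there is a smooth $k$-jet $\phi \in S_0^k(W)$ with $\phi(f) = 0$ in $\C[t]/(t^k)$. Extending such a jet to a full arc $\gamma \in W_\infty$ by choosing the higher-order coefficients freely preserves smoothness ($ord_0(\gamma) = 1$) and yields $ord_V(\gamma) \geq k$; conversely, truncating a smooth arc with $ord_V \geq k$ produces such a jet. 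Hence it suffices to compute $M = \max ord_V(\gamma)$ over smooth arcs through $0$ and show $M = N$.

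Transporting the question to $Y_\infty$ via $\pi_\infty$, Corollary \ref{crl-den} identifies smooth arcs in $W_\infty$ with arcs $\tilde\gamma \in Y_\infty$ satisfying $\sum_i m_i \, ord_{E_i}(\tilde\gamma) = 1$. Since the $m_i$ are positive integers, this forces a unique $j_0 \in S$ with $ord_{E_{j_0}}(\tilde\gamma) = 1$ and $ord_{E_i}(\tilde\gamma) = 0$ for $i \neq j_0$. Using $\pi^* V = \sum_i N_i E_i + \widetilde V$, we obtain
$$ord_V(\gamma) = \sum_i N_i \, ord_{E_i}(\tilde\gamma) + ord_{\widetilde V}(\tilde\gamma) = N_{j_0} + ord_{\widetilde V}(\tilde\gamma).$$
The problem therefore reduces to proving $ord_{\widetilde V}(\tilde\gamma) = 0$ always, equivalently that $E_{j_0} \cap \widetilde V = \emptyset$ for every $j_0 \in S$. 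This is the main obstacle: if any such intersection were nonempty, placing $\tilde\gamma$ at a point of $E_{j_0} \cap \widetilde V$ would allow $ord_{\widetilde V}(\tilde\gamma)$ to grow unboundedly, producing smooth arcs with arbitrarily large $V$-valuation.

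To overcome the obstacle I argue by contradiction. Suppose $p \in E_{j_0} \cap \widetilde V$ with $j_0 \in S$. By SNC and because $Y$ is a surface, exactly $E_{j_0}$ and $\widetilde V$ pass through $p$, both smooth and meeting transversely. Let $B$ be the local branch of $\widetilde V$ at $p$, so $(B \cdot E_{j_0})_p = 1$. For a generic line $L$ through $0$, locally at $p$ we have $\pi^* L = m_{j_0} E_{j_0}$ since the strict transform $\tilde L$ avoids $p$; the projection formula then gives
$$mult_0 \, \pi(B) = (\pi_*B \cdot L)_0 = (B \cdot \pi^* L)_p = m_{j_0}(B \cdot E_{j_0})_p = 1.$$
Thus $\pi(B)$ is a smooth branch of $V$ at $0$, contradicting the hypothesis. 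We conclude $ord_V(\gamma) = N_{j_0} \leq N$ for every smooth arc, so $\CHilb^k_0(C) = \emptyset$ whenever $k > N$. For optimality, pick $j_0^\star \in S$ realizing $N_{j_0^\star} = N$ and choose any smooth point of $E_{j_0^\star}$ lying off every other divisor (such points form a nonempty open subset, being a $\mathbb P^1$ minus finitely many corners). Any arc $\tilde\gamma$ through such a point and transverse to $E_{j_0^\star}$ pushes down to a smooth arc $\gamma$ with $ord_V(\gamma) = N$, realizing the bound.
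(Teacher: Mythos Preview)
Your proof is correct and structurally parallel to the paper's: both reduce everything to the key lemma that no order-$1$ exceptional divisor meets the strict transform $\widetilde V$. The differences are in how that lemma is established and how the conclusion is drawn. The paper proves the lemma arc-theoretically: an arc on $\widetilde C$ transverse to an order-$1$ divisor would push forward to an arc on $C$ through $0$ of valuation $1$, impossible since any arc on a branch of $C$ through $0$ has valuation at least the multiplicity of that branch, which exceeds $1$ by hypothesis. You instead use the projection formula with a generic line to compute the multiplicity of the image branch directly. Both are short and valid; yours is a clean intersection-theoretic alternative that avoids invoking Proposition~\ref{thm-val}. For the conclusion, the paper reads the emptiness and optimality off the motivic recursion of Corollary~\ref{rec} (the $[(E_i\cap\widetilde V)^o]$ terms vanish, so nothing contributes past degree $N$), whereas you bypass the motivic machinery entirely and argue directly from the jet/arc correspondence that $\max_{\gamma\in A} ord_V(\gamma)=N$. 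Your route is slightly more self-contained; the paper's route ties the statement back into the Denef-formula framework developed in Section~\ref{curv}.
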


\begin{rmk}
We just mention that the stabilization-to-empty threshold admits as well a formulation and consequent proof in terms of the Puiseux expansion of the curve, which we omit for the sake of clarity.
\end{rmk}

\begin{proof}
First of all, if $C = \displaystyle\cup_i C_i$ with $C_i$ an irreducible curve, then $\CHilb^k_0(C_i) \subseteq \CHilb^k_0(C)$ for all $k \geq 1$. For this reason, we need to avoid smooth branches since for $C_i$ smooth we would have $\emptyset \neq \Hilb^k_0(C_i) = \CHilb^k_0(C_i) \subseteq \CHilb^k_0(C)$ for all $k \geq 1$.
    
Then, we can reduce ourselves to work with an irreducible singular curve and we observe the following. 

\begin{lemma}
  Let $\pi: (Y, E) \rightarrow (\mathbb A^2,C)$ be an embedded resolution of the pair $(\mathbb A^2,C)$ as in Definition \ref{resol}. The strict transform of $C$ can never intersect a divisor of order $1$. 
\end{lemma}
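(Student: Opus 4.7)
The plan is to argue by contradiction. Suppose $\widetilde{C}$ meets some divisor $E_i$ of order $m_i = 1$ at a point $q$; I will use the local structure of the resolution near $q$ to produce a smooth parameterization of $C$ at $0$, contradicting the standing reduction to an irreducible singular curve.

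First I would exploit the SNC condition together with $\dim Y = 2$: at $q \in \widetilde{C} \cap E_i$, no other component of $\pi^{-1}(C)$ passes through, and $\widetilde{C}$ is smooth at $q$ and meets $E_i$ transversally. I then pick local coordinates $(u, w)$ on $Y$ centered at $q$ with $E_i = \{u = 0\}$ and $\widetilde{C} = \{w = 0\}$.

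Next I would use that $\pi$ is also a resolution of $(\mathbb{A}^2, \{0\})$, so the pullback ideal sheaf $\mathcal{I}_{\{0\}} \cdot \mathcal{O}_Y$ is the Cartier divisor $\mathcal{O}_Y(-\sum_j m_j E_j)$; near $q$ this equals $(u^{m_i}) = (u)$. Writing $\mathcal{I}_{\{0\}} = (x, y)$ in the ambient coordinates of $\mathbb{A}^2$, this yields a factorization $\pi^\ast x = u \cdot \alpha(u, w)$ and $\pi^\ast y = u \cdot \beta(u, w)$ with $(\alpha, \beta)$ the unit ideal at $q$; in particular at least one of $\alpha(q), \beta(q)$ is nonzero, say $\alpha(q) \neq 0$.

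To close the argument, I would observe that $\pi|_{\widetilde{C}} : \widetilde{C} \to C$ is a proper birational morphism from a smooth irreducible curve to $C$, hence it is the normalization of $C$. Since $C$ is irreducible with a unique branch at $0$, the local parameter $u$ on $\widetilde{C}$ at $q$ is the Puiseux uniformizer of this branch, and restricting the pullbacks to $\widetilde{C} = \{w = 0\}$ gives the parameterization
\[
(x(u),\, y(u)) \;=\; \bigl(u \cdot \alpha(u, 0),\; u \cdot \beta(u, 0)\bigr).
\]
As $\alpha(0, 0) \neq 0$, the series $\alpha(u, 0)$ is a unit in $\C[[u]]$, so $v_u(x) = 1$; this forces the multiplicity of $C$ at $0$ to be $\leq 1$, contradicting the singularity hypothesis. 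The only delicate point is extracting the factorization $\pi^\ast x = u \alpha$ with $\alpha(q) \neq 0$: it uses in an essential way that $m_i = 1$, so as to make $\pi^\ast x$ (or $\pi^\ast y$) a uniformizer along $\widetilde{C}$ at $q$; without this hypothesis $\alpha$ could vanish along $\widetilde{C}$ and no contradiction would arise.
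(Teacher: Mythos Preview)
Your proof is correct. The underlying idea coincides with the paper's: assuming $q \in \widetilde{C}\cap E_i$ with $m_i=1$, one produces a parameterization of $C$ at $0$ of order $1$, contradicting the standing assumption that $C$ is irreducible and singular.

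The difference is purely in packaging. The paper argues in the arc-theoretic language developed earlier: it picks an arc $\gamma \in \widetilde{C}_\infty$ with $ord_{E_i}(\gamma)=1$, then invokes Proposition~\ref{thm-val} and Equation~(\ref{croo}) to conclude that $\pi\circ\gamma$ lies in $A\cap C_\infty$, which is empty since any arc on $C$ through $0$ has order at least $mult_C(0)>1$. You instead unwind this in local coordinates: the hypothesis $m_i=1$ becomes $\mathcal{I}_{\{0\}}\cdot\mathcal{O}_{Y,q}=(u)$, and the arc $\gamma$ is nothing but the uniformizer $u$ on $\widetilde{C}$ at $q$, so that $\pi|_{\widetilde{C}}$ (the normalization map) visibly has $v_u(x)=1$ or $v_u(y)=1$. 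Your version is more self-contained and makes the role of $m_i=1$ transparent without appealing to the earlier machinery; the paper's version is terser precisely because it can cite that machinery.
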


\begin{proof}
  We proceed by contradiction, assuming there is a point $p \in E_i \cap \widetilde{C}$ for some divisor $E_i$ with $m_i = 1$.
  Then, we can consider an arc $\gamma \in \widetilde{C}_\infty$ such that $ord_{E_i}(\gamma) = 1$. 
  From Proposition \ref{thm-val}, it corresponds to an arc of $A \cap C_\infty$ which is empty hence leading to a contradiction since for any $\eta \in (\mathbb A^2)_\infty$ we have $ord_C(\eta) \geq mult_C(0) > 1$.
\end{proof}

We can now conclude our proof. From the formula of Corollary \ref{rec} we immediately observe that for $k > N$ we do not have any contribution from any of the two factors $[(E_i \setminus \widetilde{V})^o]$ and $[(E_i \cap \widetilde{V})^o]$, and this bound is also optimal since for $k=N$ we have the contribution of the corresponding divisor $E_i$.
\end{proof}

\begin{rmk}
We need to mention that when considering varieties of higher dimension as in Section \ref{curv}, we lose the stabilization-to-empty results for curvilinear schemes of Theorem \ref{thm-emp} as it relies on the $n=2$ assumption (for example there are parameterizable, smooth curves passing through a surface singularity).
Moreover, we lose Theorem \ref{thm-bries} providing equivalence between topology and embedded resolutions.
\end{rmk}

To conclude this section, we check the bound we just proved on a classical example.
\begin{ex} \label{cus2}
    In Example \ref{ex-cusp} we computed the resolution and corresponding multiplicities of the cusp singularity, and the corresponding order are $$(N_i, \nu_i, m_i)_{i=1,2,3} = ((2, 1, 1),(3, 2, 1),(6, 4, 2))$$ hence the bound is then given by $N=3$. 
    Choosing coordinates where $f = y^2-x^3$ and choosing representatives for the points of the Hilbert scheme we have
        $$\Hilb^2_0(C) = \{ (x+ \lambda y)_{\lambda \in \C} , (y, x^2)\} = \CHilb^2_0(C)$$ 
        $$\Hilb^3_0(C) = \{ (y+ \lambda x^2 )_{\lambda \in \C} ,(x^2, xy )\}, \;\CHilb^3_0(C) = \{ (y+ \lambda x^2 )_{\lambda \in \C} \}$$
        $$\Hilb^4_0(C) = \{ (x^2 + \lambda xy)_{\lambda \in \C} , (xy, x^3)\}, \; \CHilb^4_0(C) =\emptyset$$ 
        $$\Hilb^5_0(C) = \{ (xy+ \lambda x^3)_{\lambda \in \C} , (x^3, x^2y)\}, \; \CHilb^5_0(C) = \emptyset$$ 
        $$\dots$$
    and we can see that the curvilinear components stabilize to empty for $k > 3$ and the bound is optimal.
\end{ex}

\section{References}
\printbibliography[heading=none]

@article{Fantini,
  author =       "Belotto da Silva, A. and Fantini, L. and Pichon, A.",
  title =        "Inner geometry of complex surfaces: a valuative approach",
  journal = "Geom. Topol. 26",
year = "2022"
}

@article{DL,
  author =       "Denef, J. and Loeser, F.",
  title =        "Motivic Igusa zeta functions",
journal = "Invent. Math. 135",
year = "1999"
}

@article{mea,
  author =       "Denef, J. and Loeser, F.",
  title =        "Germs of arcs on singular algebraic varieties and motivic integration",
journal = "J. Algebraic Geom. 7 (3)",
year = "1998"
}

@book{mum,
  author =       "Mumford, D. and Fogarty, J. and Kirwan, F.",
  title =        "Geometric Invariant Theory",
note = "Ergeb. Math. Grenzgeb. (2), 34 [Results in Mathematics and Related Areas (2)]",
publisher = "Springer-Verlag, Berlin",
year = "1994"
}

@article{mot,
  author =       "Gillet, H. and Soule, C.",
  title =        "Descent, Motives and K-theory",
journal = "J. für die reine und angewandte Math., 478",
year = "1996"
}

@article{serre,
  author =       "Serre, J.-P.",
  title =        "Espaces fibrés algébriques",
journal = "Séminaire Bourbaki: années 1953/54, exposé n. 82",
year = "1954"
}

@article{berkir,
  author =       "Bérczi, G. and Doran, B. and Hawes, T. and Kirwan, F.",
  title =        "Geometric invariant theory for graded unipotent groups and applications",
journal = "J. of Topology 11 (3)",
year = "2016"
}

@article{EGA,
  author =       "Grothendieck, A.",
  title =        "Techniques de construction et théorèmes d'existence en géométrie algébrique IV : les schémas de Hilbert",
journal = "Séminaire Bourbaki: années 1960/61, exposé n. 221",
year = "1961"
}

@article{kr2,
  author =       "Khovanov, M.",
  title =        "Triply-graded link homology and Hochschild homology of Soergel bimodules",
journal = "Int. J. of Math. 18 (8)",
year = "2005"
}

@article{hodge,
  author =       "Deligne, P.",
  title =        "Théorie de Hodge : II",
journal = "Publ. Math. de l'IHÉS (40)",
year = "1971"
}

@article{brid,
  author =       "Bridgeland, T.",
  title =        "An introduction to motivic Hall algebras",
journal = "Advances in Mathematics 229 (1)",
year = "2012"
}

@article{kr1,
  author =       "Khovanov, M. and Rozansky, L.",
  title =        "Matrix factorizations and link homology",
journal = "Geom. Topol. 199 (1)",
year = "2004"
}

@article{HC,
  author =       "Fogarty, J.",
  title =        "Algebraic Families on an Algebraic Surface",
journal = "American J. of Math. 90 (2)",
year = "1968"
}

@article{cdgz,
  author =       "Campillo, A. and Delgado, F. and Gusein-Zade, S. M.",
  title =        "The Alexander polynomial of a plane curve singularity via the ring of functions on it",
journal = "Duke Math. J. 117 (1)",
year = "2002"
}

@article{mau,
  author =       "Maulik, D.",
  title =        "Stable pairs and the HOMFLY polynomial",
journal = "Invent. Math. 204",
year = "2016"
}

@article{ors,
  author =       "Oblomkov, A. and Rasmussen, J. and Shende, V.",
  title =        "The Hilbert scheme of a plane curve singularity and the HOMFLY homology of its link",
journal = "Geom. Topol. 22 (2)",
year = "2012"
}

@article{resol,
  author =       "Hironaka, H.",
  title =        "Resolution of Singularities of an Algebraic Variety Over a Field of Characteristic Zero: I",
journal = "Annals of Math. 79 (1)",
year = "1964"
}

@article{Berczi,
  author =       "Bérczi, G.",
  title =        "Tautological integrals on curvilinear Hilbert schemes",
journal = "Geom. Topol. 21 (5)",
year = "2017"
}

@book{Brieskorn,
  author =       "Brieskorn, E. and Knörrer, H.",
  title =        "Plane Algebraic Curves: Translated by John Stillwell",
publisher = "Modern Birkhäuser Classics, Basel",
year = "2012"
}

@book{greenbook,
  author =       "Chambert-Loir, A. and Nicaise, J. and Sebag, J.",
  title =        "Motivic Integration",
note = "Progress in Mathematics 325",
publisher = "Birkhäuser, New York",
year = "2018"
}

@book{milnor,
  author =       "Milnor, J.",
  title =        "Singular Points of Complex Hypersurfaces",
note = "Annals of Mathematics Studies 61",
publisher = "Princeton University Press",
year = "1968" 
}

@article{denfor,
  author =       "Denef, J.",
  title =        "Local zeta functions and Euler characteristics",
journal = "Duke Math. J. 63",
year = "1991"
}

@article{report,
  author =       "Denef, J.",
  title =        "Report on Igusa’s local zeta function",
journal = "Séminaire Bourbaki: années 1990/91, exposé n. 741",
year = "1991"
}

@unpublished{kon,
    author = "M. Kontsevich",
    title = "Lecture on motivic integration",
    note = "Orsay, December 7, 1995."
}

\end{document}